%
%
\documentclass[12pt]{amsart}

\makeatletter

\@addtoreset{equation}{section}
\makeatother
\usepackage[margin=3cm]{geometry}

\usepackage[dvipdfmx]{graphicx}
\usepackage{geometry,amsmath,amssymb,bbm,amscd,graphicx}
\usepackage{mathrsfs} 
\usepackage[T1]{fontenc}
\usepackage{ytableau}
 \usepackage{mathtools}
\usepackage{tikz}
\usepackage{xcolor}

\allowdisplaybreaks[2]
\theoremstyle{definition}
\newtheorem{Theorem}[equation]{Theorem}

\newtheorem*{conj*}{Conjecture}
\newtheorem*{theorem*}{Theorem}
 
\newtheorem{Corollary}[equation]{Corollary}

\newtheorem{Lemma}[equation]{Lemma}
\newtheorem{Example}[equation]{Example}
\begin{document}

\begin{center}
{\Large\textbf{Duality formula and its generalization for Schur multiple zeta functions}}
\vspace{5mm}\end{center}

\begin{center}
{\large Maki Nakasuji\footnote{Supported by Grant-in-Aid for Scientific Research (C) 18K03223.}
and Yasuo Ohno\footnote{Supported by Grant-in-Aid for Scientific Research (C) 19K03437.} }
\vspace{20mm}\end{center}

\begin{center}
{\textsc{abstract}}\vspace{5mm}\\
\end{center}
{\footnotesize In the study on multiple zeta values, the duality formula is one of the families of basic relations and plays an important role in the 
investigation of algebraic structure of the space spanned by all multiple zeta values 
along with the generalized duality formula (so called Ohno relation) obtained by the second author.
In this article, we will discuss them for the Schur multiple zeta values which are the values at positive integers of 
the Schur multiple zeta function introduced by the first author, O. Phukswan and Y. Yamasaki.}
 \vspace{2mm}\\

\noindent
\textbf {2010 Mathematics Subject Classification : }11M41, 05E05\\
\textbf {Key words and phrases : }Schur multiple zeta function\\
\subjclass[2020]{11M41, 05E05}
\keywords{Schur multiple zeta function}
\vskip 1cm
\par\noindent

%
%
%
\section{Introduction}
The classical multiple zeta and zeta-star values are defined by
$$\zeta(k_1, \ldots, k_r)=\sum_{0<m_1<\cdots < m_r}\frac{1}{{m_1}^{k_1}\cdots m_r^{k_r}},
\quad
\zeta^{\star}(k_1, \ldots, k_r)=\sum_{0< m_1\leq \cdots \leq  m_r}\frac{1}{{m_1}^{k_1}\cdots m_r^{k_r}}
$$
for $k_1, \ldots, k_{r-1}\geq 1$, $k_r>1$.
The ${\mathbb Q}$-vector spaces ${\mathcal Z}$ spanned by multiple zeta values and spanned by multiple zeta-star values
are the same, and its algebraic structure has been strongly investigated during these three decades.
One of the most popular results in this area is as follows: \vspace{2mm}\\
For any integer $k\geq 1$, we denote by ${\mathcal Z}_k$ the ${\mathbb Q}$-subspace of ${\mathcal Z}$ spanned by 
multiple zeta values of weight $k$. Then
the dimension of ${\mathcal Z}_k$
is less than or equal to $d_k$, where $d_k$ satisfies $\displaystyle\frac{1}{1-t^2-t^3}=\sum_k d_k t^k$. \vspace{2mm}\\
This is obtained by Goncharov and Terasoma (\cite{Go}, \cite{Te}, \cite{DG}). Further, there is a large difference in the number of convergence indices of 
multiple zeta values for each weight.
These results show that there are plenty of linear relations among these values.
However, their exact structure remains quite mysterious. 

Some families of relations which are important to understand the structure of the ring of multiple zeta values
${\mathcal Z}={\mathbb Q}+\sum_k {\mathcal Z}_k$
 are already known. Among them, the duality formula is one of the most important relations. This shows that
 all multiple zeta values except the selfdual case have a pair of multiple zeta values of the same weight with same values.
 The generalized duality formula called Ohno relation is a big family which includes 
 not only the duality but also other basic relations such as the sum formula and the derivation relation.
Moreover, by this formula we can see how  
the duality in the space of a small weight contributes the relation in the space of a big weight, which may help us
to understand the structure of them.
On the other hand, compared with that the duality for multiple zeta values is well known as above, the duality formula for multiple zeta-star values 
had not been obtained before introducing the Schur multiple zeta values.

The Schur multiple zeta functions introduced in \cite{NPY} are defined as sums over combinatorial objects called semi-standard Young tableaux
and generalize the classical multiple zeta and zeta-star functions.
Nakasuji-Phukswan-Yamasaki (\cite{NPY}) showed some determinant formulas for them such that Jacobi-Trudi, Giambelli and dual Cauchy formulas,
which lead to quite non-trivial algebraic relations among multiple zeta and zeta-star functions.
The Schur multiple zeta values are the values at the positive integers of  the Schur multiple zeta function,
which also generalize the classical multiple zeta and zeta-star values.
Nakasuji-Phukswan-Yamasaki (\cite{NPY}) established iterated integral representations of the Schur multiple zeta values of ribbon type, which yield a duality formula for multiple zeta-star values.

The primary goal of this article is to obtain the duality formula for the Schur multiple zeta values of skew type including ribbon type. And as the second,  
applying this formula, we will show the generalized duality formula called Ohno relation for the Schur type.

This article is organized as follows.
In Section 2, we review the basic terminology and some known results for the Schur multiple zeta function. 
In Section 3, we first give the definition of dual index (tableau) of convergence index for Schur multiple zeta value, and 
we will discuss the duality formula for the Schur multiple zeta values of skew type under some assumption.
In the proof, we will use the Jacobi-Trudi formula for the Schur multiple zeta function which is obtained in \cite{NPY} and the duality formula of the classical multiple zeta functions.
In Section 4,  we will consider the Ohno relation for the Schur type.
In this part, we will use the key lemma in the extended Jacobi-Trudi formula for the Schur multiple zeta functions which is shown in \cite{NT}
and the Ohno relation for the classical one.

\section{Preliminaries}
We review the Schur multiple zeta function from \cite{NPY}.
Let $\mathbb{N}$, $\mathbb{C}$ be the set of positive integers, and of complex numbers,
respectively.
Let $\lambda=(\lambda_1, \cdots, \lambda_m)$ be a non-increasing sequence of 
$n\in\mathbb{N}$, i.e. $\lambda_1\geq \lambda_2\geq \cdots \lambda_m>0$ with $|\lambda|:=\sum_i\lambda_i=n$.
Then a {\it Young diagram} $D_{\lambda}$ of shape $ \lambda$ is obtained by drawing $\lambda_i$ boxes in the $i$-th row.
We often identify $\lambda$ with the corresponding Young diagram.
Let $T_{\lambda}(X)$ be the set of all Young tableaux of shape $\lambda$ over a set $X$ and, in particular, $\mathrm{SSYT}_{\lambda}\subset T_{\lambda}(\mathbb{N})$ the set of all semi-standard Young tableaux of shape $\lambda$. Recall that $M=(m_{ij})\in \mathrm{SSYT}_{\lambda}$ if and only if $m_{i1}\le m_{i2}\le \cdots$ for all $i$ and $m_{1j}<m_{2j}<\cdots $ for all $j$. For  ${\pmb s}=(s_{ij})\in T_{\lambda}(\mathbb{C}),$ the Schur multiple zeta-function associated with $\lambda$ is defined as in \cite{NPY} by the series 
$$
\zeta_{\lambda}({ \pmb s})=\sum_{M\in \mathrm{SSYT}_{\lambda}}
{M^{ -\pmb s}}, 
$$
where $M^{ -\pmb s}=\displaystyle{\prod_{(i, j)\in \lambda}m_{ij}^{-s_{ij}}}$ for $M=(m_{ij})\in \mathrm{SSYT}_{\lambda}$. This series converges absolutely if ${\pmb s}\in W_{\lambda}$ where 
\[
  W_\lambda = W_\lambda({\mathbb C}) = 
\left\{{\pmb s}=(s_{ij})\in T_{\lambda}(\mathbb{C})\,\left|\,
\begin{array}{l}
 \text{$\Re(s_{ij})\ge 1$ for all $(i,j)\in D_{\lambda} \setminus C_{\lambda}$ } \\[3pt]
 \text{$\Re(s_{ij})>1$ for all $(i,j)\in C_{\lambda}$}
\end{array}
\right.
\right\}
\]
 with $C_{\lambda}$ being the set of all corners of $\lambda$.
 The Schur multiple zeta functions have a natural extension to those of skew type. Let $\lambda$ and $\mu$
 be partitions satisfying $\mu\subset \lambda$. The Young diagram of skew type $\lambda/\mu$ is the array of boxes contained in 
 $\lambda$ but not in $\mu$.
We may write $\delta=\lambda/\mu$.
The notations $T_{\delta}(X)$ for a set $X$, and $SSYT_{\delta}$ are the set of all Young tableau 
of skew type
and semi-standard Young tableaux of that type, respectively.
Then,
for  ${\pmb s}=(s_{ij})\in T_{\delta}(\mathbb{C}),$ the skew type Schur multiple zeta-function associated with $\delta$
is defined as in \cite{NPY} by the series 
$$
\zeta_{\delta}({ \pmb s})=\sum_{M\in \mathrm{SSYT}_{\delta}}
{M^{ -\pmb s}}, 
$$
where $M^{ -\pmb s}=\displaystyle{\prod_{(i, j)\in \delta}m_{ij}^{-s_{ij}}}$ for 
$M=(m_{ij})\in \mathrm{SSYT}_{\delta}$. This series converges absolutely if ${\pmb s}\in W_{\delta}$ where
$W_{\delta}$ is also similarly defined as $W_{\lambda}$.

In \cite{NPY}, they obtained some determinant formulas such as the Jacobi-Trudi, Giambelli and dual Cauchy formula for Schur
multiple zeta functions under certain assumption on variables. 
Nakasuji and Takeda \cite{NT} showed certain extended Jacobi-Trudi formula.
We here  review the (extended) Jacobi-Trudi formula with the key lemma.

A skew Young diagram is called a {\it ribbon} if it is connected and contains no $2\times 2$ block of boxes.
The maximal outer ribbon of $\lambda$ is called the rim of $\lambda$.
We peel the diagram $\lambda$ off into successive rims $\theta_t, \theta_{t-1}, \ldots, \theta_1$ beginning from the outside of $\lambda$
then a sequence $\Theta=(\theta_1, \ldots, \theta_t)$ of ribbons is called a {\it rim decomposition of $\lambda$}.
For $\lambda'=(\lambda_1', \ldots, \lambda_s')$, 
if each $\theta_i$ starts from $(1, i)$ for all $1\leq i\leq s$, then a rim decomposition $\Theta$ of $\lambda$ is called an {\it E-rim decomposition}.
 Here, we permit $\theta_i=\emptyset$. We denote by $Rim_E^{\lambda}$ the set of all $E$-rim decomposition of $\lambda$.
 
 \begin{Example}
 The following $\Theta=(\theta_1, \theta_2, \theta_3, \theta_4)$ is an $E$-rim decomposition of $\lambda=(4,3,3,2)$;
 $$
 \Theta=
 \begin{ytableau}
 1 & 2 & 3 & 4 \\
1 & 2 & 3\\
1 & 3 & 3\\
3 & 3
\end{ytableau},
 $$
 which means that 
 $\theta_1=
 \begin{ytableau}
{}\\
{}\\
{}
\end{ytableau}
$,
 $\theta_2=
 \begin{ytableau}
{}\\
{}
\end{ytableau}
 $, 
 $\theta_3=
 \begin{ytableau}
\none & \none & {}\\
\none & \none & {}\\
\none & {} &{} \\
{} & {}
\end{ytableau}
 $
and  $\theta_4=
 \begin{ytableau}
\\
\end{ytableau}
 $.

 \end{Example}
 
 Now consider the patterns  corresponding $E$-rim on the ${\mathbb Z}^2$ lattice.
 Fix $N\in {\mathbb N}$. For a partition $\lambda'=(\lambda'_1, \ldots, \lambda'_s)$, let $c_i$ and $d_i$ be lattice points in ${\mathbb Z}^2$
 respsctrively given by $c_i=(s+1-i, 1)$ and $d_i=(s+1-i+\lambda_i', N+1)$ for $1\leq i \leq s$.
 An {\it E-pattern} corresponding to $\lambda$ is a tuple
 $L=(\ell_1, \ldots, \ell_s)$ of directed paths on ${\mathbb Z}^2$, whose directions are allowed only to go one to the northeast or one up, such that $\ell_i$
 starts from $c_i$ and ends to $d_{\sigma_i}$ for some $\sigma\in {\frak S}_s$.
 Such $\sigma \in {\frak S}_s$ is called {\it type} of $L$ and we denote it by $\sigma={\rm{type}}(L)$.
 Let ${\mathcal E}_{\lambda}^N$ be the set of all $E$-patterns corresponding to $\lambda$. 
 
 \begin{Example}
Let $\lambda=(4,3,3,2)$. Then $\lambda'=(4,4,3,1)$ and
the following $L=(\ell_1,\ell_2, \ell_3, \ell_4) $ is one of the element in $\mathcal{E}^{6}_{(4,3,3,2)}$ for $\sigma=(1\;2\; 3)\in {\frak S}_4$.
\begin{figure}[h]
\begin{center}
 \begin{tikzpicture} 
  \node at (0.5,1) {$1$};
  \node at (0.5,2) {$2$};
  \node at (0.5,3) {$3$};
  \node at (0.5,4) {$4$};
  \node at (0.5,5) {$5$};
  \node at (0.5,6) {$6$};
  \node at (0.5,7) {$7$};
  \node at (1.7,5.5) {$\ell_4$};
    \node at (4.2,5.5) {$\ell_2$};
    \node at (5.2,5.5) {$\ell_1$};
    \node at (6.2,5.5) {$\ell_3$};
     \node at (1,0) {$c_4$};  
     \node at (2,0) {$c_3$};
     \node at (3,0) {$c_2$};
     \node at (4,0) {$c_1$};
     \node at (1,0.5) {$(1,1)$};  
     \node at (2,0.5) {$(2,1)$};
     \node at (3,0.5) {$(3,1)$};
     \node at (4,0.5) {$(4,1)$};
       \node at (2,7.5) {$(2,7)$};
       \node at (5,7.5) {$(5,7)$};
       \node at (7,7.5) {$(7,7)$}; 
       \node at (8,7.5) {$(8,7)$};
       \node at (2,8) {$d_4$};
       \node at (5,8) {$d_3$};
       \node at (7,8) {$d_2$}; 
       \node at (8,8) {$d_1$}; 
   \node at (1,1) {$\bullet$};
   \node at (1,2) {$\bullet$};
   \node at (1,3) {$\bullet$};
   \node at (1,4) {$\bullet$};   
   \node at (1,5) {$\bullet$};
   \node at (1,6) {$\bullet$};
   \node at (1,7) {$\bullet$};    
   \node at (2,1) {$\bullet$};
   \node at (2,2) {$\bullet$};
   \node at (2,3) {$\bullet$};  
   \node at (2,4) {$\bullet$};
   \node at (2,5) {$\bullet$};
   \node at (2,6) {$\bullet$};
   \node at (2,7) {$\bullet$};    
   \node at (3,1) {$\bullet$};
   \node at (3,2) {$\bullet$};
   \node at (3,3) {$\bullet$};  
   \node at (3,4) {$\bullet$};
   \node at (3,5) {$\bullet$};
   \node at (3,6) {$\bullet$};
   \node at (3,7) {$\bullet$};    
   \node at (4,1) {$\bullet$};
   \node at (4,2) {$\bullet$};
   \node at (4,3) {$\bullet$};  
   \node at (4,4) {$\bullet$};  
   \node at (4,5) {$\bullet$};
   \node at (4,6) {$\bullet$};
   \node at (4,7) {$\bullet$};    
   \node at (5,1) {$\bullet$};
   \node at (5,2) {$\bullet$};
   \node at (5,3) {$\bullet$};  
   \node at (5,4) {$\bullet$};  
   \node at (5,5) {$\bullet$};
   \node at (5,6) {$\bullet$};
   \node at (5,7) {$\bullet$};    
   \node at (6,1) {$\bullet$};
   \node at (6,2) {$\bullet$};
   \node at (6,3) {$\bullet$}; 
   \node at (6,4) {$\bullet$};  
   \node at (6,5) {$\bullet$};
   \node at (6,6) {$\bullet$};
   \node at (6,7) {$\bullet$};    
   \node at (7,1) {$\bullet$};
   \node at (7,2) {$\bullet$};
   \node at (7,3) {$\bullet$}; 
   \node at (7,4) {$\bullet$};  
   \node at (7,5) {$\bullet$};
   \node at (7,6) {$\bullet$};
   \node at (7,7) {$\bullet$};    
   \node at (8,1) {$\bullet$};
   \node at (8,2) {$\bullet$};
   \node at (8,3) {$\bullet$};  
   \node at (8,4) {$\bullet$};  
   \node at (8,5) {$\bullet$};
   \node at (8,6) {$\bullet$};
   \node at (8,7) {$\bullet$};    
 \draw (4,1) -- (5,2) -- (5,3) -- (5,4) -- (5,5) -- (6,6) -- (7,7);
 \draw[dotted] (3,1) -- (3,2) -- (3,3) -- (4,4) -- (4,5) -- (5,6) -- (5,7);
 \draw[loosely dashdotted] (2,1) -- (3,2) -- (4,3) -- (5,4) -- (6,5) -- (7,6) -- (8,7);      
 \draw[dashed] (1,1) -- (1,2) -- (1,3) -- (2,4) -- (2,5) -- (2,6) -- (2,7);
 \end{tikzpicture}.
\end{center}
\ \\[-10pt]
\caption{$L=(\ell_1, \ell_2, \ell_3, \ell_4)\in \mathcal{E}^{6}_{(4,3,3,2)}$}
\end{figure}
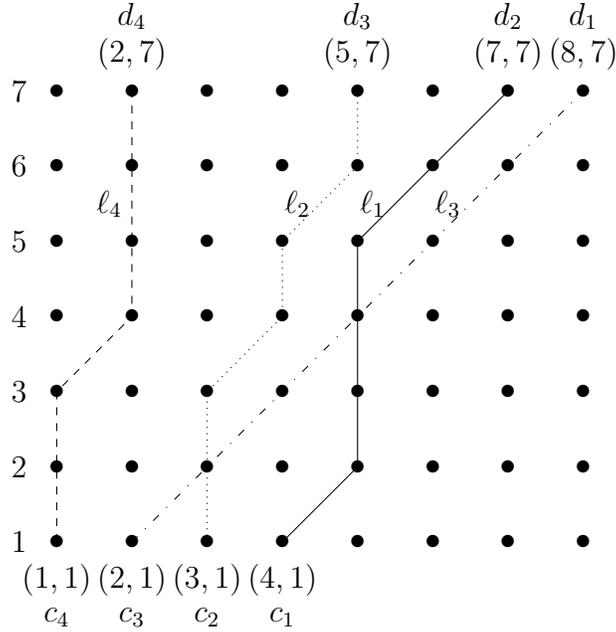
\end{Example}
 
 Put $S_E^{\lambda}=\{{\rm{type}}(L)\in {\frak S}_s | L\in {\mathcal E}_{\lambda}^N\}$,
then the bijection map $\tau_E : {\rm{Rim}}_E^{\lambda}\to S_E^{\lambda}$ given by $\tau_E(\Theta)={\rm{type}}(L)$ exists. (cf. \cite[Lemma 3.12]{NPY}).

 \begin{Lemma}\label{Lem31}
 For any partition $\lambda=(\lambda_1, \ldots, \lambda_r)$ and its conjugate $\lambda'=(\lambda_1', \ldots, \lambda'_s)$,
 let $\Theta^{\sigma}=(\theta_1^{\sigma}, \ldots, \theta_s^{\sigma})\in {\rm{Rim}}_E^{\lambda}$ be the $E$-rim decomposition such that $\tau_E(\Theta^{\sigma})=\sigma$ for $\sigma\in S_{E}^{\lambda}$. 
For $\Theta=(\theta_1, \ldots, \theta_s)\in {\rm{Rim}}_E^{\lambda}$, $\theta_i({\pmb s})\in {\mathbb C}^{|\theta_i|}$ is the tuple obtained by reading contents of the shape restriction of ${\pmb s}$ to $\theta_i$ from the top right to the bottom left.
And let $\varepsilon_{\sigma}$ be the signature of $\sigma\in {\frak S}_s$.
Set $$W_{\lambda, E}=W_{\lambda, E}({\mathbb C})=\left\{{\pmb s}=(s_{ij})\in T_{\lambda}( {\mathbb C}) \left| 
\begin{matrix}
\Re(s_{ij})\geq 1 \; {\rm{for\; all}}\; (i, j)\in D_{\lambda}\backslash E_{\lambda}\\
\hspace{-7mm}\Re(s_{ij})> 1 \; {\rm{for\; all}}\; (i, j)\in E_{\lambda}\\
\end{matrix}
\right. \right\},$$
where 
$E_{\lambda}=\{(i, j)\in D_{\lambda} | i-j\in \{i-\lambda_i' | 1\leq i \leq s\}\}$.
When ${\pmb s}\in W_{\lambda, E}$,
 we have
 \begin{equation}\label{keylemma2}
 \sum_{diag}\zeta_{\lambda}({\pmb s})=\sum_{diag}\sum_{\sigma\in S_E^{\lambda}}\varepsilon_{\sigma}\prod_{i=1}^s \zeta(\theta_i^{\sigma}({\pmb s})),
 \end{equation}
 where $\sum_{diag}=\sum_{\substack{\sigma_j\in {S}_j\\ j\in {\mathbb Z}}}\prod_{i\in {\mathbb Z}}\sigma_i$ for $S_j$ being the set of permunation of the elements of $I(J)=\{(k, \ell)\in D_{\lambda} | \ell-k=j\}$.
 \end{Lemma}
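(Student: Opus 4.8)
Since \eqref{keylemma2} is the key lemma behind the extended Jacobi--Trudi formula of \cite{NT}, my plan is to reprove it by a lattice-path argument based on the Lindstr\"om--Gessel--Viennot (LGV) lemma, treating first the case where the variables are constant along diagonals (which is essentially the Jacobi--Trudi formula of \cite{NPY}) and then using the symmetrization $\sum_{diag}$ to pass to the general case. First I would reduce to finite sums: for $N\in{\mathbb N}$ set $\zeta_\lambda^{(N)}({\pmb s})=\sum_{M\in\mathrm{SSYT}_\lambda,\,m_{ij}\le N}M^{-{\pmb s}}$ and likewise $\zeta^{(N)}$ for the ribbon multiple zeta values; it suffices to prove the truncated form of \eqref{keylemma2} and let $N\to\infty$, which is legitimate on the region where all series in sight converge absolutely (say $\Re(s_{ij})>1$ everywhere), the case ${\pmb s}\in W_{\lambda,E}$ following by meromorphic continuation. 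On the truncated level the standard bijection between semistandard tableaux of shape $\lambda$ with entries $\le N$ and nonintersecting $E$-patterns in ${\mathcal E}_\lambda^N$, together with $\tau_E$ as recalled before the statement, writes $\zeta_\lambda^{(N)}({\pmb s})$ as the sum over nonintersecting families $L=(\ell_1,\dots,\ell_s)$ of the weight in which the $k$-th NE step of $\ell_i$, taken at height $h$, contributes $h^{-s_{ki}}$; since the first coordinates of $c_1,\dots,c_s$ and of $d_1,\dots,d_s$ are strictly decreasing, every nonintersecting family automatically has $\mathrm{type}(L)=\mathrm{id}$.

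I would then settle the case where ${\pmb s}$ is constant along diagonals, $s_{ij}=u_{j-i}$. In that case the weight $h^{-u_d}$ of an NE step lying in the $d$-th diagonal strip at height $h$ is a function of the lattice edge alone, so the generating function of a single directed path from $c_i$ to $d_{\sigma(i)}$ depends only on the strips it crosses; unwinding the ribbon/lattice-path dictionary --- each ribbon meets each diagonal in exactly one box --- identifies it with $\zeta^{(N)}(\theta_i^{\sigma}({\pmb s}))$ for the $E$-rim decomposition $\Theta^{\sigma}=\tau_E^{-1}(\sigma)$. The LGV lemma for the system $\{c_i\}\to\{d_j\}$ then gives $\zeta_\lambda^{(N)}({\pmb s})=\sum_{\sigma\in S_E^\lambda}\varepsilon_\sigma\prod_i\zeta^{(N)}(\theta_i^{\sigma}({\pmb s}))$, which after $N\to\infty$ is \eqref{keylemma2} in this case (with $\sum_{diag}$ trivial). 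This step is, in effect, the Jacobi--Trudi formula of \cite{NPY} read through the rim decomposition.

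The remaining issue is to remove the diagonal-constancy hypothesis, and this is where $\sum_{diag}$ is needed. For general ${\pmb s}$ the edge weight $h^{-s_{ki}}$ is no longer a function of the lattice edge alone: along a diagonal $d$ the box of $D_\lambda$ carrying a prescribed NE step depends on the configuration, and distinct ribbons $\theta_i^{\sigma}$ occupy distinct boxes of $D_\lambda$ on $d$ --- which is exactly why plain Jacobi--Trudi fails here. Applying $\sum_{diag}$ symmetrizes the exponents within each diagonal. On the left, for a fixed tableau $M$ the boxes on diagonal $d$ then contribute the permanent $\operatorname{perm}\big((h_k^{(d)})^{-s_{b_\ell^{(d)}}}\big)_{k,\ell}$, where $h_1^{(d)}<\dots<h_{n_d}^{(d)}$ are the (strictly increasing) entries of $M$ on $d$ and $b_1^{(d)},\dots,b_{n_d}^{(d)}$ are its boxes; on the right a parallel symmetrization acts on each ribbon factor. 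I would then show that both sides are thereby obtained from the diagonal-constant identity of the previous step by the same ``polarization'' --- splitting the single variable attached to diagonal $d$ into $n_d$ labeled copies carrying the exponents $\{s_{ij}:j-i=d\}$ and summing over the relabelings --- which is legitimate because that identity is an identity in the strip variables, and this yields \eqref{keylemma2}.

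I expect this final step to be the main obstacle: one must verify that $\sum_{diag}$, applied at once to the left side and to every ribbon factor on the right, produces on the two sides the \emph{same} symmetrized sum over nonintersecting path families --- equivalently, that the permanents recording the exponents along each diagonal match term by term once the heights $h_k^{(d)}$ are identified. This is delicate precisely because the box of $D_\lambda$ on diagonal $d$ occupied by a ribbon $\theta_i^{\sigma}$ varies with $\sigma$, so the bookkeeping must follow the $E$-rim decompositions carefully. The ancillary points --- the tableaux/nonintersecting-path bijection, absolute convergence on $W_{\lambda,E}$ to justify rearranging the sums and the limit $N\to\infty$, and the meromorphic continuation --- are routine, with the details running parallel to \cite{NT}.
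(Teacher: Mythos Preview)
Your plan is correct and is precisely the Lindstr\"om--Gessel--Viennot argument underlying \cite[Lemma~3.1]{NT}, to which the paper's own proof simply defers without giving any details. The verification you flag as the main obstacle --- that after applying $\sum_{diag}$ the weight of an $E$-pattern on each diagonal becomes a permanent in the heights and the exponents $\{s_{ij}:j-i=d\}$, hence symmetric in the heights, so the tail-swapping involution (which preserves the multiset of NE-step heights in each strip) becomes weight-preserving --- is exactly the point, and it goes through; this direct route is slightly cleaner than your detour through the diagonal-constant identity followed by polarization, but the two are equivalent.
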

 
 \begin{proof}
The assertion is from similar calculation as in Lemma 3.1 in \cite{NT}.
 \end{proof}

If all the elements in each diagonal line in ${\pmb s}$ are equal,  $\sum_{diag}$ in \eqref{keylemma2} doesn't make sense and
we have following 
\begin{Corollary}\label{Cor31}
Set $W_{\lambda}^{\mathrm{diag}}=\{{\pmb s}=(s_{ij})\in W_{\lambda} | s_{ij}=s_{pq}\; {\rm{if}}\; j-i=q-p\}$.
 When ${\pmb s}\in W_{\lambda}^{\mathrm{diag}}$,
 we have
 \begin{equation}\label{keylemma1}
\zeta_{\lambda}({\pmb s})=\sum_{\sigma\in S_E^{\lambda}}\varepsilon_{\sigma}\prod_{i=1}^s \zeta(\theta_i^{\sigma}({\pmb s})).
\end{equation}
\end{Corollary}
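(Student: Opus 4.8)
The plan is to obtain \eqref{keylemma1} directly from Lemma \ref{Lem31} by observing that, on the diagonal‑constant locus $W_\lambda^{\mathrm{diag}}$, the symmetrization $\sum_{diag}$ degenerates to multiplication by a nonzero integer and therefore cancels from both sides of \eqref{keylemma2}. By its definition in Lemma \ref{Lem31}, $\sum_{diag}$ is the group‑algebra element $\sum_{g\in G}g$, where $G=\prod_{j\in\mathbb Z}S_j$ is the product of the symmetric groups $S_j$ on the diagonals $I(j)=\{(k,\ell)\in D_\lambda\mid \ell-k=j\}$, each $g\in G$ acting on a tableau by permuting its entries within each diagonal. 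If $\pmb s\in W_\lambda^{\mathrm{diag}}$, then every entry that such a $g$ could move lies on a single diagonal of $\pmb s$, and all entries on a diagonal of $\pmb s$ are equal; hence $g\cdot\pmb s=\pmb s$ for all $g\in G$, and for any function $F$ of the tableau argument one gets $\sum_{diag}F(\pmb s)=|G|\,F(\pmb s)$ with $|G|=\prod_{j}|I(j)|!\ge 1$.

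Applying this to both sides of \eqref{keylemma2}: the left‑hand side becomes $|G|\,\zeta_\lambda(\pmb s)$. For the right‑hand side, the assignment $\pmb s\mapsto\sum_{\sigma\in S_E^\lambda}\varepsilon_\sigma\prod_{i=1}^s\zeta(\theta_i^\sigma(\pmb s))$ is itself a function of the tableau $\pmb s$, since each $\theta_i^\sigma(\pmb s)$ is read off from $\pmb s$; so the same observation gives $|G|\sum_{\sigma\in S_E^\lambda}\varepsilon_\sigma\prod_{i=1}^s\zeta(\theta_i^\sigma(\pmb s))$. Dividing by the nonzero factor $|G|$ yields \eqref{keylemma1}.

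The point requiring care is the matching of domains. Lemma \ref{Lem31} supplies \eqref{keylemma2} only on $W_{\lambda,E}$, while the corollary asserts \eqref{keylemma1} on $W_\lambda^{\mathrm{diag}}\subset W_\lambda$, and in general neither of $W_\lambda$, $W_{\lambda,E}$ contains the other (already $E_\lambda\ne C_\lambda$ in general, e.g.\ for $\lambda=(3,1)$). I would first run the argument above on the nonempty convex set $W_\lambda^{\mathrm{diag}}\cap W_{\lambda,E}$, proving \eqref{keylemma1} there, and then extend it to all of $W_\lambda^{\mathrm{diag}}$ by analytic continuation: $\zeta_\lambda$ is holomorphic on $W_\lambda^{\mathrm{diag}}$, so it suffices to know that the right‑hand side is holomorphic there too, i.e.\ that the polar hyperplanes of the individual multiple zeta factors $\zeta(\theta_i^\sigma(\pmb s))$ cancel in the alternating sum over $\sigma\in S_E^\lambda$ throughout $W_\lambda^{\mathrm{diag}}$. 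Establishing this cancellation — which is exactly what promotes the right‑hand side to a genuine holomorphic function on the larger region — is the main obstacle; once it is in hand, equality of two holomorphic functions on the connected set $W_\lambda^{\mathrm{diag}}$ follows from their agreement on the open subset $W_\lambda^{\mathrm{diag}}\cap W_{\lambda,E}$.
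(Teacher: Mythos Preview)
Your argument is exactly the paper's: on $W_\lambda^{\mathrm{diag}}$ every element of $G=\prod_j S_j$ fixes $\pmb s$, so $\sum_{diag}$ acts as multiplication by the positive integer $|G|$ on both sides of \eqref{keylemma2} and cancels. The paper records this in a single sentence and does not address the domain discrepancy you flag between $W_\lambda^{\mathrm{diag}}$ and $W_{\lambda,E}$; your proposed analytic-continuation patch (with its attendant pole-cancellation check) goes beyond what the paper itself supplies.
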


 The following Jacobi-Trudi formula for Schur multiple zeta function under suitable assumption on variables is obtained by Corollary \ref{Cor31}.
 
 \begin{Theorem}(\cite[Theorem1.1(2)]{NPY})
Let $\lambda=(\lambda_1, \ldots, \lambda_r)$ be a partition and $\lambda'=(\lambda_1', \ldots, \lambda'_s)$ be the conjugate of $\lambda$. 
Put
 $$W_{\lambda}^{\rm JT}=\left\{{\pmb s}=(s_{ij})\in W_{\lambda}^{\mathrm{diag}} 
 \left| 
\; \Re(s_{\lambda_i', i)}>1 \; {\rm for}\; 1\leq{}^{\forall}i\leq s
 \right.\right\}$$
and we write $a_k=s_{i, i+k}$ for $k\in {\mathbb Z}$ (and for any $i\in {\mathbb N}$).
Then we have
\begin{equation}\label{JTE}
\zeta_{\lambda}({\pmb s})= \det \left[ \zeta (a_{j-1}, a_{j-2}, \ldots, a_{j-(\lambda'_i-i+j)})\right]_{1\leq i, j\leq s}.
\end{equation}
Here, we understand that $\zeta(\cdots)=1$ if $\lambda'_i-i+j=0$ and $0$ if $\lambda'_i-i+j<0$.
\end{Theorem}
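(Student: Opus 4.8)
The plan is to read \eqref{JTE} off from Corollary~\ref{Cor31} by recognizing the signed sum in \eqref{keylemma1} as the Leibniz expansion of the determinant. Throughout write $M_{pq}=\zeta\bigl(a_{q-1},a_{q-2},\dots,a_{q-(\lambda'_p-p+q)}\bigr)$ for the $(p,q)$-entry of the matrix in \eqref{JTE}, so that $M_{pq}=1$ when $\lambda'_p-p+q=0$ and $M_{pq}=0$ when $\lambda'_p-p+q<0$. Since $W_{\lambda}^{\rm JT}\subset W_{\lambda}^{\mathrm{diag}}$ by definition, Corollary~\ref{Cor31} applies to ${\pmb s}\in W_{\lambda}^{\rm JT}$ and yields
$$\zeta_{\lambda}({\pmb s})=\sum_{\sigma\in S_E^{\lambda}}\varepsilon_{\sigma}\prod_{i=1}^{s}\zeta(\theta_i^{\sigma}({\pmb s})),$$
the right-hand side consisting of convergent classical multiple zeta values thanks to the extra condition in the definition of $W_{\lambda}^{\rm JT}$ (verified below); so everything reduces to identifying each factor $\zeta(\theta_i^{\sigma}({\pmb s}))$ with a matrix entry.

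The claim is that $\zeta(\theta_i^{\sigma}({\pmb s}))=M_{\sigma(i),i}$ for every $\sigma\in S_E^{\lambda}$ and every $1\le i\le s$. I would prove it from two properties of the ribbon $\theta_i^{\sigma}$ of $\Theta^{\sigma}$ (defined in Lemma~\ref{Lem31}). First, its head --- the cell that starts the ``top right to bottom left'' reading --- is $(1,i)$, by the very definition of an $E$-rim decomposition, and $(1,i)$ lies on the diagonal indexed by $i-1$. Second, a ribbon contains no $2\times2$ block, hence each step along $\theta_i^{\sigma}$ in that reading is a single left- or down-move, each of which decreases the diagonal index by exactly $1$; thus $\theta_i^{\sigma}$ occupies the consecutive diagonals $i-1,i-2,\dots$ with one cell on each, and since ${\pmb s}\in W_{\lambda}^{\mathrm{diag}}$ this gives $\theta_i^{\sigma}({\pmb s})=(a_{i-1},a_{i-2},\dots,a_{i-|\theta_i^{\sigma}|})$. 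To locate the tail it remains to compute $|\theta_i^{\sigma}|$; here I would unwind the bijection $\tau_E$ of \cite[Lemma 3.12]{NPY}: in the $E$-pattern of type $\sigma$ the path $\ell_i$ runs from $c_i=(s+1-i,1)$ to $d_{\sigma(i)}=(s+1-\sigma(i)+\lambda'_{\sigma(i)},N+1)$, so it uses $\lambda'_{\sigma(i)}-\sigma(i)+i$ northeast steps, which under the path--ribbon correspondence is exactly $|\theta_i^{\sigma}|$; equivalently, the tail of $\theta_i^{\sigma}$ sits on the diagonal $\sigma(i)-\lambda'_{\sigma(i)}$, the diagonal through the bottom cell $(\lambda'_{\sigma(i)},\sigma(i))$ of column $\sigma(i)$. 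Putting these together, $\zeta(\theta_i^{\sigma}({\pmb s}))=\zeta\bigl(a_{i-1},\dots,a_{\sigma(i)-\lambda'_{\sigma(i)}}\bigr)$; comparing the first index forces ``column $i$'', and comparing the last index --- using that $p\mapsto p-\lambda'_p$ is strictly increasing, hence injective --- forces ``row $\sigma(i)$'', i.e.\ $\zeta(\theta_i^{\sigma}({\pmb s}))=M_{\sigma(i),i}$. The last argument here is $a_{\sigma(i)-\lambda'_{\sigma(i)}}=s_{\lambda'_{\sigma(i)},\sigma(i)}$, whose real part exceeds $1$ by the extra hypothesis defining $W_{\lambda}^{\rm JT}$, so all the classical multiple zeta values involved converge; this is the only place that extra hypothesis is used.

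It then remains to pass from the sum over $S_E^{\lambda}$ to the sum over all of ${\frak S}_s$. By the previous step $\zeta_{\lambda}({\pmb s})=\sum_{\sigma\in S_E^{\lambda}}\varepsilon_{\sigma}\prod_{i=1}^{s}M_{\sigma(i),i}$, while $\det[M_{pq}]_{1\le p,q\le s}=\sum_{\tau\in{\frak S}_s}\varepsilon_{\tau}\prod_{i=1}^{s}M_{\tau(i),i}$. From the path description, $\tau\in S_E^{\lambda}$ exactly when $\lambda'_{\tau(i)}-\tau(i)+i\ge0$ for all $i$ (taking $N$ large, as may be assumed); hence for $\tau\notin S_E^{\lambda}$ there is an $i$ with $\lambda'_{\tau(i)}-\tau(i)+i<0$, so $M_{\tau(i),i}=0$ by convention and that summand vanishes. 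Therefore the two sums agree and $\zeta_{\lambda}({\pmb s})=\det[M_{pq}]$, which is \eqref{JTE}.

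I expect the real obstacle to be the location of the tail of $\theta_i^{\sigma}$, i.e.\ the identity $|\theta_i^{\sigma}|=\lambda'_{\sigma(i)}-\sigma(i)+i$: proving it cleanly requires reconciling the combinatorics of the bijection $\tau_E$ in \cite[Lemma 3.12]{NPY}, the chosen ``top right to bottom left'' reading convention, and the several competing index conventions ($\lambda$ versus $\lambda'$, rows versus columns, and the shift by $N+1$ in the lattice model). Once that is settled, the reduction to a determinant is purely formal.
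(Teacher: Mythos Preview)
Your proposal is correct and follows exactly the approach the paper indicates: the paper itself gives no detailed proof, stating only that the theorem ``is obtained by Corollary~\ref{Cor31}'', and your argument is precisely the unpacking of that sentence---identifying $\zeta(\theta_i^{\sigma}({\pmb s}))=M_{\sigma(i),i}$ via the bijection $\tau_E$ and then recognizing the resulting signed sum as the Leibniz expansion of the determinant. Your verification that $|\theta_i^{\sigma}|=\lambda'_{\sigma(i)}-\sigma(i)+i$ (which you flag as the main obstacle) is consistent with Examples~2.1--2.2, and your handling of $\tau\notin S_E^{\lambda}$ via the vanishing convention is the standard completion of the argument.
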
 

For skew type Schur multiple zeta functions, we have the similar formula:
\begin{Lemma}\label{Lemdagger}
Let $\delta=\lambda/\mu$.
For ${\pmb s}
\in W_{\delta, E}({\mathbb C})$, we have
$$
\sum_{diag}\zeta_{\delta} ({\pmb s})
=\sum_{diag}\sum_{\sigma\in S_E^{\delta}}\varepsilon_{\sigma}\prod_{i=1}^s \zeta(\theta_i^{\sigma}({\pmb s})).
$$
\end{Lemma}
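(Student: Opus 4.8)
The plan is to reproduce, mutatis mutandis, the argument behind Lemma \ref{Lem31} (equivalently Lemma 3.1 of \cite{NT}), replacing the straight shape $\lambda$ throughout by the skew shape $\delta=\lambda/\mu$. The only structural datum that really changes is the geometry of the underlying lattice‑path model: one keeps the terminal points $d_i$ governed by $\lambda'$ exactly as in Section~2, but pushes the starting points $c_i$ inward so that they are governed by $\mu'$, the conjugate of the inner partition. With this modification, the set $\mathcal E_\delta^N$ of $E$-patterns, the type map $L\mapsto\mathrm{type}(L)$, the sets $S_E^\delta$ and $\mathrm{Rim}_E^\delta$, the distinguished decompositions $\Theta^\sigma=(\theta_1^\sigma,\dots,\theta_s^\sigma)$, and the domain $W_{\delta,E}(\mathbb C)$ (with $E_\delta$ the evident skew analogue of $E_\lambda$) are all defined verbatim; and one first records, as the skew counterpart of \cite[Lemma~3.12]{NPY}, that $\tau_E\colon\mathrm{Rim}_E^\delta\to S_E^\delta$, $\Theta\mapsto\mathrm{type}(L)$, is a bijection.

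The key steps are then as follows. First, set up the dictionary between $\mathrm{SSYT}_\delta$ and families of non‑intersecting directed paths on $\mathbb Z^2$ from $\{c_i\}$ to $\{d_i\}$ (northeast or up steps): a northeast step of the $j$‑th path at height $h$ encodes a box of the $j$‑th column of $M$ with entry $h$, column‑strictness of $M$ matching the monotonicity of the heights, and the weakly‑increasing rows matching the non‑crossing condition. Under this dictionary $M^{-\pmb s}$ becomes a product of step weights, so $\zeta_\delta(\pmb s)$ is the generating function of the non‑intersecting families; absolute convergence on $W_{\delta,E}(\mathbb C)$ licenses all rearrangements. Second, apply the Lindstr\"om--Gessel--Viennot lemma to replace the single non‑intersecting family by the signed sum $\sum_{\sigma\in S_E^\delta}\varepsilon_\sigma$ over all path families of type $\sigma$. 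Third, for a fixed type $\sigma$ the weight of a family factors over its $s$ paths, and the path attached to the $i$‑th ribbon $\theta_i^\sigma$ of $\Theta^\sigma$ sums to the classical multiple zeta value $\zeta(\theta_i^\sigma(\pmb s))$; here one uses that the boxes forced to carry a strict exponent in the definition of $W_{\delta,E}(\mathbb C)$ are precisely those occurring as the terminal entry of some ribbon, so that each $\theta_i^\sigma(\pmb s)$ is an admissible index and the product converges. Finally, since a generic $\pmb s$ is not constant along the diagonals of $D_\delta$, the variable attached to a northeast step in the path model is pinned down only up to a permutation of the variables lying on the corresponding diagonal; applying $\sum_{diag}=\sum_{\sigma_j\in S_j}\prod_i\sigma_i$ to both sides absorbs exactly this ambiguity, and the stated identity falls out term by term after this symmetrization.

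The main obstacle, I expect, is the setup step rather than anything analytic: one must make the skew path model precise and check that the bijection $\tau_E$ and the ``single ribbon $\leftrightarrow$ classical multiple zeta value'' correspondence survive the passage to skew shapes. Because the inner boundary of $\delta$ affects the model only locally — it alters the starting configuration of the paths but nothing downstream — the verification is essentially bookkeeping once the straight‑shape case of \cite{NPY, NT} is in hand; still, some care is needed to confirm that a skew ribbon read from the top right to the bottom left yields a classical multiple zeta value of the expected weight, and that $E_\delta$ is chosen so that $W_{\delta,E}(\mathbb C)$ lands inside the domain of convergence of every $\zeta(\theta_i^\sigma(\pmb s))$ appearing on the right. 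Granting these points, the Lindstr\"om--Gessel--Viennot step and the diagonal symmetrization are formally identical to the straight‑shape case, so no input beyond absolute convergence on $W_{\delta,E}(\mathbb C)$ is required.
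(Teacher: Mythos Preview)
Your proposal is correct and follows precisely the route the paper intends. In fact the paper gives no separate proof of this lemma at all: it is stated immediately after the straight-shape version (Lemma~\ref{Lem31}, whose own proof is just ``similar calculation as in Lemma~3.1 in \cite{NT}'') and is meant to be understood as the evident skew analogue, with the lattice-path model modified so that the starting points are shifted by $\mu'$ while the terminal points are still governed by $\lambda'$. Your sketch --- skew SSYT $\leftrightarrow$ non-intersecting paths, Lindstr\"om--Gessel--Viennot, identification of each ribbon factor with a classical $\zeta$, then diagonal symmetrization to handle non-constant diagonals --- is exactly the content of that analogue, and the bookkeeping points you flag (the skew bijection $\tau_E$ and the convergence condition $W_{\delta,E}$) are the only places where anything has to be re-checked.
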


 \begin{Theorem}\label{skewJT}(\cite[Theorem4.3(2)]{NPY})
Retain the above notations. Assume that ${\pmb s}=(s_{ij})\in W_{\delta}^{\rm{JT}}$ and we write $a_k=s_{i, i+k}$ for $k\in {\mathbb Z}$ (and for any $i\in {\mathbb N}$).
Then we have
\begin{equation}\label{JTEskew}
\zeta_{\delta}({\pmb s})= \det \left[ \zeta (a_{-\mu_j'+j-1}, a_{-\mu_j'+j-2}, \ldots, a_{-\mu_j'+j-(\lambda'_i-i+j)})\right]_{1\leq i, j\leq s}.
\end{equation}
Here, we understand that $\zeta(\cdots)=1$ if $\lambda'_i-\mu_j'-i+j=0$ and $\zeta(\cdots)=0$ if $\lambda'_i-\mu_j'-i+j<0$ or
at least one $a_n$ doesn't exist ($-\mu_j'+j-1\geq n \geq \mu_j'+j-(\lambda'-i+j)$) among the variants in the $ij$-th element.
\end{Theorem}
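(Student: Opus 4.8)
The plan is to obtain \eqref{JTEskew} from Lemma \ref{Lemdagger} by exactly the mechanism through which \eqref{JTE} is deduced from Lemma \ref{Lem31} via Corollary \ref{Cor31}. First I would observe that $W_\delta^{\rm JT}$, which by definition consists of diagonally constant tuples satisfying the skew analogues of the conditions defining $W_\lambda^{\rm JT}$, is contained in $W_{\delta,E}({\mathbb C})$: the extra requirement $\Re(s_{\lambda_i',i})>1$ supplies the strict convergence needed at the boxes $(\lambda_i',i)$, so the absolute-convergence hypothesis of Lemma \ref{Lemdagger} is met. For such ${\pmb s}$ every permutation occurring in $\sum_{diag}$ merely permutes equal quantities and thus acts trivially, so $\sum_{diag}$ may be stripped from both sides of the identity of Lemma \ref{Lemdagger}, just as in the passage from \eqref{keylemma2} to \eqref{keylemma1}; this yields
\[
\zeta_\delta({\pmb s})=\sum_{\sigma\in S_E^{\delta}}\varepsilon_\sigma\prod_{i=1}^s\zeta\bigl(\theta_i^\sigma({\pmb s})\bigr).
\]
It then remains to identify this signed sum over $E$-rim decompositions of $\delta$ with the determinant on the right-hand side of \eqref{JTEskew}.

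For that identification I would expand the determinant along the symmetric group, $\det[H_{ij}]_{1\le i,j\le s}=\sum_{\sigma\in{\frak S}_s}\varepsilon_\sigma\prod_{i=1}^s H_{i\sigma(i)}$ with $H_{ij}=\zeta(a_{-\mu_j'+j-1},\ldots,a_{-\mu_j'+j-(\lambda_i'-i+j)})$, and read each nonzero factor $H_{ij}$ as the ordinary multiple zeta value attached to a single border strip $\eta_{ij}\subset\delta$ whose content sequence, read from its top-right box to its bottom-left box, is $(a_{-\mu_j'+j-1},\ldots,a_{-\mu_j'+j-(\lambda_i'-i+j)})$; the degenerate conventions in the statement amount to $H_{ij}=1$ when $\eta_{ij}=\emptyset$ and $H_{ij}=0$ when no such strip fits inside $\delta$, i.e.\ when $\lambda_i'-\mu_j'-i+j<0$ or a diagonal required for $\eta_{ij}$ is empty. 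The combinatorial core is then the claim that for $\sigma\in S_E^{\delta}$ the strips $\eta_{1\sigma(1)},\ldots,\eta_{s\sigma(s)}$ assemble into the $E$-rim decomposition $\Theta^\sigma$ with $\tau_E(\Theta^\sigma)=\sigma$, so that $\prod_{i}H_{i\sigma(i)}=\prod_{i}\zeta(\theta_i^\sigma({\pmb s}))$ — the content sequences agreeing precisely because ${\pmb s}$ is constant along diagonals — whereas for $\sigma\notin S_E^{\delta}$ the product $\prod_i H_{i\sigma(i)}$ either vanishes or cancels against a partner term under a sign-reversing involution. I would make this rigorous by extending the lattice-path model of \cite{NPY}: enlarge the family ${\mathcal E}_\lambda^N$ to the skew diagram $\delta$ by having the $i$-th directed path begin at a lattice point displaced by $\mu_i'$ from $c_i$ (so that the $\mu$-part of the shape is already traversed), run the Lindström--Gessel--Viennot path-switching argument on tuples of non-intersecting paths, and transport the result back through the bijection $\tau_E$ of \cite[Lemma 3.12]{NPY}.

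The delicate part — and what I expect to be the main obstacle — is the index bookkeeping with the $\mu_j'$-shifts present: one must verify that the argument list $(a_{-\mu_j'+j-1},\ldots,a_{-\mu_j'+j-(\lambda_i'-i+j)})$ is indeed the content sequence of the border strip occurring as a ribbon segment of a skew $E$-rim decomposition, pin down exactly which strip $\eta_{ij}$ a matrix position $(i,j)$ corresponds to, handle the several degenerate cases listed in the statement, and then check that the non-$E$-type permutations contribute nothing. An alternative to re-running the path cancellation from scratch would be to reduce \eqref{JTEskew} to the straight-shape formula \eqref{JTE}: apply the latter to a partition large enough to contain both $\lambda$ and $\mu$, then perform the usual row and column manipulations on the Jacobi--Trudi matrix that isolate the skew determinant, while monitoring convergence as the hypotheses relax from $W_\lambda^{\rm JT}$ to $W_\delta^{\rm JT}$. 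In either approach, once the strips $\eta_{ij}$ have been matched with ribbons of $E$-rim decompositions, \eqref{JTEskew} follows at once from the displayed consequence of Lemma \ref{Lemdagger} above.
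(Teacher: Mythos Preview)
The paper does not give its own proof of this statement: Theorem~\ref{skewJT} is simply quoted from \cite[Theorem~4.3(2)]{NPY} with no argument supplied here, so there is nothing in the present paper to compare your proposal against.

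That said, your plan is entirely consistent with the way the paper frames the surrounding material. The paper remarks that the straight-shape Jacobi--Trudi formula \eqref{JTE} ``is obtained by Corollary~\ref{Cor31}'', i.e.\ by specializing Lemma~\ref{Lem31} to diagonally constant ${\pmb s}$ and then identifying the signed sum over $E$-rim decompositions with the determinant via the lattice-path bijection $\tau_E$ of \cite[Lemma~3.12]{NPY}. Your proposal carries this over to the skew case through Lemma~\ref{Lemdagger}, which is the natural parallel, and your description of the LGV cancellation with $\mu_j'$-shifted starting points is precisely the mechanism used in \cite{NPY} to establish the result there. So while I cannot certify your index bookkeeping against a proof that does not appear in this paper, the overall strategy matches both the paper's narrative for the non-skew case and the original source it cites.
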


\section{Duality formula}\label{sectionresults}
For positive integers $k_1, \ldots, k_n$ with $k_n\geq 2$, the multiple zeta values $\zeta(k_1, \ldots, k_n)$ converges absolutely and such an index set ${\bf k}=(k_1, \ldots, k_n)$ is called {\it admissible} index set. When we write an admissible index set 
${\bf k}$ as
$${\bf k}=(\underbrace{1, \ldots, 1}_{a_1-1}, b_1+1, \underbrace{1, \ldots, 1}_{a_2-1}, b_2+1, \ldots, \underbrace{1, \ldots, 1}_{a_m-1}, b_m+1)
$$
with $a_1, b_1, a_2, b_2, \cdots, a_m, b_m\in {\mathbb Z}_{\geq 1}$, the following index set is called {\it dual} index set of ${\bf k}$:
$${\bf k^{\dagger}}=(\underbrace{1, \ldots, 1}_{b_m-1}, a_m+1, \underbrace{1, \ldots, 1}_{b_{m-1}-1}, a_{m_1}+1, \ldots, \underbrace{1, \ldots, 1}_{b_1-1}, a_1+1).
$$

Then the well-known duality relation for multiple zeta values is as follows.
\begin{Theorem}\label{Thduality}
For any admissible index set ${\bf k}$ and its dual index set ${\bf k}^{\dagger}$, we have
$$\zeta({\bf k})=\zeta({\bf k}^{\dagger}).
$$
\end{Theorem}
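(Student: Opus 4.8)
The plan is to use the classical proof by iterated integrals, namely the change of variables $t\mapsto 1-t$ in the iterated integral representation of multiple zeta values; no input beyond the elementary theory is needed here.

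First I would record the iterated integral representation. Set $\omega_0=\frac{dt}{t}$ and $\omega_1=\frac{dt}{1-t}$, and to the admissible index
$$
{\bf k}=(\underbrace{1,\ldots,1}_{a_1-1},b_1+1,\ldots,\underbrace{1,\ldots,1}_{a_m-1},b_m+1)
$$
associate the word $\varphi({\bf k})=\omega_1^{a_1}\omega_0^{b_1}\omega_1^{a_2}\omega_0^{b_2}\cdots\omega_1^{a_m}\omega_0^{b_m}$ of length $w=|{\bf k}|=k_1+\cdots+k_n$, obtained by letting each entry $k_i=\ell$ contribute $\omega_1\omega_0^{\ell-1}$ (so a block of $1$'s produces a power of $\omega_1$ and each $b_i+1$ produces $\omega_1\omega_0^{b_i}$). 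Writing $\varphi({\bf k})=(\eta_1,\ldots,\eta_w)$, one proves
$$
\zeta({\bf k})=\int_{0<t_1<\cdots<t_w<1}\eta_1(t_1)\cdots\eta_w(t_w)
$$
by expanding each factor $\frac{1}{1-t}=\sum_{\nu\ge0}t^{\nu}$, integrating the resulting monomials over the simplex, and comparing term by term with the defining series. Admissibility ($k_n\ge2$) is exactly the condition that the word end in $\omega_0$, which is what makes this integral converge at $t_w=1$ (and the word automatically begins with $\omega_1$, ensuring convergence at $t_1=0$).

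Next I would apply the substitution $u_i=1-t_{w+1-i}$. It maps the open simplex $\{0<t_1<\cdots<t_w<1\}$ bijectively onto $\{0<u_1<\cdots<u_w<1\}$ while reversing the order of the coordinates, and since $1-(1-t)=t$ it interchanges $\omega_0\leftrightarrow\omega_1$. Carrying the substitution out one nested integral at a time, each sign coming from $d(1-t)=-dt$ is absorbed by the accompanying reversal of a pair of integration limits, so there is no overall sign. Hence $\zeta({\bf k})$ equals the iterated integral attached to the word obtained from $\varphi({\bf k})$ by reading it backwards and swapping the two letters, namely $\omega_1^{b_m}\omega_0^{a_m}\omega_1^{b_{m-1}}\omega_0^{a_{m-1}}\cdots\omega_1^{b_1}\omega_0^{a_1}$.

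Finally I would decode this word back into an index. Comparing block structure with the definition of the dual index, the word $\omega_1^{b_m}\omega_0^{a_m}\cdots\omega_1^{b_1}\omega_0^{a_1}$ is precisely $\varphi({\bf k}^{\dagger})$ for
$$
{\bf k}^{\dagger}=(\underbrace{1,\ldots,1}_{b_m-1},a_m+1,\ldots,\underbrace{1,\ldots,1}_{b_1-1},a_1+1),
$$
which is again admissible (its word ends in $\omega_0$), so the same integral equals $\zeta({\bf k}^{\dagger})$; combining the three steps yields $\zeta({\bf k})=\zeta({\bf k}^{\dagger})$. The only delicate part is bookkeeping: verifying the iterated integral formula term by term together with its convergence at $0$ and $1$ (where admissibility enters), and checking that the reverse-and-swap involution on words coincides with the stated rule ${\bf k}\mapsto{\bf k}^{\dagger}$. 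There is no conceptual obstacle, this being a classical result.
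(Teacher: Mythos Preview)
Your argument is the standard and correct proof of the classical duality for multiple zeta values via the Kontsevich--Drinfeld iterated integral representation and the involution $t\mapsto 1-t$; the bookkeeping you flag (convergence at the endpoints under admissibility, and matching the reverse-and-swap on words with ${\bf k}\mapsto{\bf k}^{\dagger}$) is routine.

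There is nothing to compare against here: the paper does not prove Theorem~\ref{Thduality}. It is quoted as the ``well-known duality relation'' and used as an input to the proof of Theorem~\ref{maintheorem} (the Schur duality), where it is applied entrywise inside the Jacobi--Trudi determinant. So your write-up supplies a proof the paper simply assumes.
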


Inspired by this formula, we will define a tableau which is ``dual" to ${\pmb k}\in T_{\delta}({\mathbb Z})$ with
$\lambda=(\lambda_1, \ldots, \lambda_r)$ and $\mu=(\mu_1, \ldots, \mu_r)$ being two partitions such that $\lambda_i\geq \mu_i$ for any $i$.
First, we denote a finer piece of index $\underbrace{1, \ldots, 1}_{a-1}, b+1$ as $A(a,b)$ and call it {\it admissible piece}. Then if we write $A_i:=A(a_i, b_i)$
and $A_i^{\dagger}:=A(b_i, a_i)$,  
above
admissible index set ${\bf k}$ and its dual ${\bf k}^{\dagger}$ are written in terms of admissible pieces:
$${\bf k}=(A(a_1, b_1), A(a_2, b_2), \ldots, A(a_m, b_m))=(A_1, A_2, \ldots, A_m)
$$
and
$${\bf k^{\dagger}}=(A(b_m, a_m), A(b_{m-1}, a_{m-1}), \ldots, A(b_1, a_1))=(A_m^{\dagger}, A_{m-1}^{\dagger}, \ldots, A_1^{\dagger}).
$$
 
Let $T^{\mathrm{diag}}_\delta({\mathbb Z})=\{{\pmb s}\in T_{\delta}({\mathbb Z}) | s_{ij}=s_{pq} \; {\mathrm{if}}\; j-i=q-p\}$. 
 Put $I_{\delta}^D$ to be the set of elements in $T_{\delta}^{\mathrm{diag}}({\mathbb Z})$ consisted of
admissible pieces such that the right side of the top element in each column is not $1$. 
For ${\pmb s}\in I_{\delta}^D$, in terms of admissible pieces, 
the row which has topmost component is numbered as the first row. 
We write the component in the $i$-th row and $j$-th column in terms of admissible pieces as $A_{ij}$. 
We note that
the component in the upper-right corner which is in the $j$-th row is $A_{1j}$
and that $A_{ij}=A_{k\ell}$ if $j-i=\ell-k$ when they are not empty. 
Further, we notice that in terms of tableau,
the top element in $A_{ij}$ and the bottom element in $A_{i(j+1)}$ are located side by side.

Next, we write ${\pmb k}\in I_{\delta}^{\rm D}$ as
\begin{equation}\label{notationk}
{\pmb k}={\pmb k}^{\mathrm{col}}_1\cdots {\pmb k}^{\mathrm{col}}_{\lambda_1},
\end{equation}
where ${\pmb k}^{\mathrm{col}}_{j}$ is the $j$-th column tableau of ${\pmb k}$. For example, when $\lambda=(3, 2, 1)$ and
$${\pmb k}=
\ytableausetup{boxsize=normal}  
\begin{ytableau}
 k_{11}   & k_{12} & k_{13}\\
 k_{21}  & k_{22}\\
 k_{31}
\end{ytableau},
$$
then 
${\pmb k}^{\mathrm{col}}_{1}=
\begin{ytableau}
 k_{11} \\
 k_{21} \\
 k_{31}
\end{ytableau}
$,
${\pmb k}^{\mathrm{col}}_{2}=
\begin{ytableau}
 k_{12} \\
 k_{22} 
\end{ytableau}
$
and
${\pmb k}^{\mathrm{col}}_{3}=
\begin{ytableau}
 k_{13} 
\end{ytableau}
$.
We regard ${\pmb k}^{\mathrm{col}}_{j}$ ($1\leq j\leq \lambda_1$) as the corresponding admissible index set in order from top to bottom.
So, we may write it in terms of admissible pieces as well. 
If the $j$-th column tableau ${\pmb k}^{\mathrm{col}}_{j}$ starts $A_{nj}$ for some $n$ and  has $m+1$ admissible pieces,
then ${\pmb k}^{\mathrm{col}}_{j}={}^t (A_{nj}, \ldots A_{(n+m) j})$. 
Then the dual tableau is ${\pmb k}^{\mathrm{col}, {\dagger}}_{j}={}^t (A_{(n+m)j}^{\dagger}, \ldots A_{nj}^{\dagger})$.
We define ${\pmb k}^{\dagger}$ by arranging  ${\pmb
 k}^{\mathrm{col}, {\dagger}}_{\lambda_1}, \ldots, {\pmb k}^{\mathrm{col}, {\dagger}}_{1}$ in this order from left to right,
where we put the top element in $A_{ij}^{\dagger}$ and the bottom element in $A_{i(j-1)}^{\dagger}$ side by side
for $2\leq j\leq \lambda_1$ if both $A_{ij}^{\dagger}$ and $A_{i(j-1)}^{\dagger}$ are not empty.

\begin{Example}
For $\delta=\lambda/\mu$ with $\lambda=(3,2,1)$ and $\mu=\emptyset$,
let ${\pmb k}=
\begin{ytableau}
\ytableausetup{centertableaux}
 2 & 2 & 3 \\
4 & 2\\
5 
\end{ytableau}$. Then 
${\pmb k}=
{\pmb k}^{\mathrm{col}}_{1}{\pmb k}^{\mathrm{col}}_{2}{\pmb k}^{\mathrm{col}}_{3},
$
where 
${\pmb k}^{\mathrm{col}}_{1}=
\begin{ytableau}
2\\
4\\
5 
\end{ytableau}
$,
${\pmb k}^{\mathrm{col}}_{2}=
\begin{ytableau}
2\\
2 
\end{ytableau}
$
and
${\pmb k}^{\mathrm{col}}_{3}=
\begin{ytableau}
3
\end{ytableau}
$.
In terms of admissible pieces, 
${\pmb k}^{\mathrm{col}}_{1}={}^t(A(1,1), A(1,3), A(1,4))={}^t(A_{11}, A_{21}, A_{31})$, 
${\pmb k}^{\mathrm{col}}_{2}={}^t(A(1,1), A(1,1))={}^t(A_{12}, A_{22})$ and
${\pmb k}^{\mathrm{col}}_{3}={}^t(A(1,2))={}^t(A_{13})$ and
${\pmb k}=
\begin{ytableau}
 A_{11} & A_{12} & A_{13} \\
A_{21} & A_{23}\\
A_{31} 
\end{ytableau}$
,
where 
$$
\begin{array}{|c|c|c|c|}
\hline
 & A(a_i,b_i) & A(a_i,b_i)^{\dagger} & {\rm pieces\; in}\; {\pmb k}\\
\hline
A(1,1) & 2 & 2 & A_{11}, A_{12}, A_{22}\\
\hline
A(1,2) & 3 & 1, 2 & A_{13}\\
\hline
A(1,3) & 4 & 1, 1, 2 & A_{21}\\
\hline
A(1,4) & 5 & 1, 1, 1, 2 & A_{31}\\
\hline
\end{array}.
$$
The dual tableau of 
${\pmb k}^{\mathrm{col}, {\dagger}}_{j}$ are
${\pmb k}^{\mathrm{col}, {\dagger}}_{3}=
\begin{ytableau}
1\\
2 
\end{ytableau}
=
\begin{ytableau}
A_{13}^{\dagger}
\end{ytableau}
$,
${\pmb k}^{\mathrm{col}, {\dagger}}_{2}=
\begin{ytableau}
2\\
2 
\end{ytableau}
=
\begin{ytableau}
A_{22}^{\dagger}\\
A_{12}^{\dagger}
\end{ytableau}
$
and
${\pmb k}^{\mathrm{col}, {\dagger}}_{3}=
\begin{ytableau}
1\\
1\\1\\2\\
1\\1\\2\\2
\end{ytableau}
=
\begin{ytableau}
A_{31}^{\dagger}\\
A_{21}^{\dagger}\\
A_{11}^{\dagger}\\
\end{ytableau}
$. And so, the dual tableau 
${\pmb k}^{\dagger}=
\begin{ytableau}
\none & \none & A_{31}^{\dagger}\\
\none & A_{22}^{\dagger} & A_{21}^{\dagger}\\
A_{13}^{\dagger} & A_{12}^{\dagger} & A_{11}^{\dagger}\\
\end{ytableau}
=
\begin{ytableau}
\none & \none & 1\\
\none & \none & 1\\
\none & \none & 1\\
\none & \none & 2\\
\none & \none & 1\\
\none & \none & 1\\
\none & 2 & 2\\
1& 2 & 2\\
2
\end{ytableau}.
$
\end{Example}

\begin{Example}
For $\delta=\lambda/\mu$ with $\lambda=(3,2,1)$ and $\mu=(1, 1)$,
let ${\pmb k}=
\begin{ytableau}
 \none & 1 & 3 \\
4 & 2\\
5 
\end{ytableau}$. Then 
${\pmb k}=
\begin{ytableau}
A_{11} & A_{12} & A_{13} \\
A_{21}
\end{ytableau}$
and  the dual tableau is
${\pmb k}^{\dagger}=
\begin{ytableau}
\none & \none & A_{21}^{\dagger}\\
A_{13}^{\dagger} & A_{12}^{\dagger} & A_{11}^{\dagger}\\
\end{ytableau}
=
\begin{ytableau}
\none & \none & 1\\
\none & \none & 1\\
\none & \none & 1\\
\none & \none & 2\\
\none & \none & 1\\
\none & \none & 1\\
1 & 3 & 2\\
2 
\end{ytableau}.
$
\end{Example}

\begin{Theorem}\label{maintheorem}
Let $\lambda$ and $\mu$ be partitions. Put $\delta=\lambda/\mu$ and ${\pmb k} \in I_{\delta}^{\rm D}$.
For ${\pmb k}^{\dagger}$ being the dual tableau of ${\pmb k}$ and $\delta^{{\dagger}}$ being the shape of ${\pmb k}^{\dagger}$, we have
$$
\zeta_{\delta}({\pmb k})=\zeta_{\delta^{\dagger}}({\pmb k}^{\dagger}).
$$
\end{Theorem}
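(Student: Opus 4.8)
The plan is to reduce the identity to the classical duality of Theorem~\ref{Thduality} via the skew Jacobi--Trudi formula of Theorem~\ref{skewJT}. The first step is to observe that the hypothesis ${\pmb k}\in I_{\delta}^{\rm D}$ places ${\pmb k}$ in the domain $W_{\delta}^{\rm JT}$ of Theorem~\ref{skewJT}: every admissible piece $A(a,b)=(\underbrace{1,\dots,1}_{a-1},b+1)$ ends in an integer $\ge 2$, so the bottom entry of each column of ${\pmb k}$, hence in particular the entry at each corner of $\delta$, is $\ge 2$, while the constancy of ${\pmb k}$ along diagonals is built into the definition of $I_{\delta}^{\rm D}\subset T_{\delta}^{\rm diag}({\mathbb Z})$. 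Theorem~\ref{skewJT} then expresses $\zeta_{\delta}({\pmb k})=\det\bigl[\zeta(b^{(i,j)})\bigr]_{1\le i,j\le s}$ with $s=\lambda_{1}$, where $b^{(i,j)}$ is the classical index read off the prescribed run of diagonals of ${\pmb k}$ in \eqref{JTEskew} (with the conventions $\zeta(\,\cdot\,)=1$ or $0$ in the boundary situations). A further, routine verification is that the clause ``the right side of the top element in each column is not~$1$'' in the definition of $I_{\delta}^{\rm D}$ forces every index $b^{(i,j)}$ that genuinely occurs to be admissible, so that Theorem~\ref{Thduality} is applicable to it.

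The second step applies Theorem~\ref{Thduality} entrywise, $\zeta(b^{(i,j)})=\zeta\bigl((b^{(i,j)})^{\dagger}\bigr)$, and hinges on the claim that $(b^{(i,j)})^{\dagger}$ coincides, as a sequence of positive integers, with the index appearing in the $(s+1-i,\,s+1-j)$-th entry of the skew Jacobi--Trudi matrix of Theorem~\ref{skewJT} for the pair $(\delta^{\dagger},{\pmb k}^{\dagger})$ — the boundary cases $b^{(i,j)}\in\{0,1\}$ being matched with the corresponding boundary cases for $\delta^{\dagger}$. Granting this claim, the matrix $\bigl[\zeta((b^{(i,j)})^{\dagger})\bigr]_{1\le i,j\le s}$ is obtained from that Jacobi--Trudi matrix by reversing the order of the rows and of the columns, an operation which multiplies the determinant by $\bigl((-1)^{\lfloor s/2\rfloor}\bigr)^{2}=1$; hence, reading Theorem~\ref{skewJT} as an expression for $\zeta_{\delta^{\dagger}}({\pmb k}^{\dagger})$, one gets $\det\bigl[\zeta((b^{(i,j)})^{\dagger})\bigr]_{1\le i,j\le s}=\zeta_{\delta^{\dagger}}({\pmb k}^{\dagger})$, which is the assertion. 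One checks en route that ${\pmb k}^{\dagger}\in I_{\delta^{\dagger}}^{\rm D}$ — the dual pieces $A(b,a)$ are again admissible pieces, $A(a,b)\mapsto A(a,b)^{\dagger}=A(b,a)$ is an involution, and the requisite non-$1$ condition for ${\pmb k}^{\dagger}$ reflects, under the $180^{\circ}$ rotation below, the analogous structure of ${\pmb k}$.

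The main obstacle is exactly the matching claim in the second step: verifying entry by entry that ``dualize each column index and reverse the order of the rows and columns'' turns the skew Jacobi--Trudi matrix of $(\delta,{\pmb k})$ into that of $(\delta^{\dagger},{\pmb k}^{\dagger})$, including the correct placement of the boundary $0$'s and $1$'s, which on the two sides are controlled by $\lambda',\mu'$ and by $(\lambda^{\dagger})',(\mu^{\dagger})'$ (the latter depending on the values of ${\pmb k}$, not merely on $\delta$). I would carry this out by first treating the non-skew case $\mu=\emptyset$: there $b^{(i,j)}=(a_{j-1},a_{j-2},\dots,a_{j-(\lambda_{i}'-i+j)})$, and for $i=j$ this is precisely the index read down the $j$-th column of ${\pmb k}$, whose dual is by definition the index read down the corresponding column of ${\pmb k}^{\dagger}$; the off-diagonal entries then fall into place by the same mechanism, since the $180^{\circ}$ rotation of the arrangement of admissible pieces relating ${\pmb k}$ and ${\pmb k}^{\dagger}$ — as in the Examples — matches, after dualization, the run of diagonals of ${\pmb k}$ appearing in the $(i,j)$-th Jacobi--Trudi entry with the run of diagonals of ${\pmb k}^{\dagger}$ appearing in the $(s+1-i,s+1-j)$-th one. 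Reinstating a nonempty $\mu$ only deletes an initial segment from certain column indices, which on the dual side is deletion of a terminal segment, and this is exactly what the shifts by $-\mu_{j}'$ (resp. $-(\mu^{\dagger})_{j}'$) in \eqref{JTEskew} encode. I expect the convergence and admissibility checks of the first step to be mechanical once the hypotheses are unfolded, so that the substance of the proof resides entirely in this bookkeeping.
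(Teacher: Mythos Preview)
Your proposal is correct and follows essentially the same route as the paper: apply the skew Jacobi--Trudi formula \eqref{JTEskew} to $\zeta_{\delta}({\pmb k})$, dualize each entry via Theorem~\ref{Thduality}, perform the substitution $(i,j)\mapsto(s+1-j,\,s+1-i)$ (which leaves the determinant unchanged), and recognize the result as the Jacobi--Trudi determinant for $\zeta_{\delta^{\dagger}}({\pmb k}^{\dagger})$. You are considerably more explicit than the paper about the convergence, admissibility, and entry-matching verifications, but the underlying strategy is identical.
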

We can say this is an extension of  \cite[Corollary 6.2]{NPY}.\\

\noindent
\begin{proof}
By the duality formula Theorem \ref{Thduality}, \eqref{JTEskew} equals
$$
\zeta_{\delta}({\pmb s})= \det \left[ \zeta (a_{-\mu_j'+j-1}, a_{-\mu_j'+j-2}, \ldots, a_{-\mu_j'+j-(\lambda'_i-\mu'_j-i+j)})^{\dagger}\right]_{1\leq i, j\leq s}.
$$
Replacing $i$ and $j$ with $s-j+1$ and $s-i+1$, respectively.
Then the property of the determinant of matrix leads to
$$ \det \left[ \zeta (a_{-\mu_{s-i+1}'+s-i}, a_{-\mu_{s-i+1}'+s-i-1}, \ldots, a_{-\mu_{s-i+1}'+s-i+1-(\lambda'_{s-j+1}-\mu'_{s-i+1}+j-i)})^{\dagger}\right]_{1\leq i, j\leq s}.
$$

Using the Jacobi-Trudi formula for Schur multiple zeta function of skew type \eqref{JTEskew} (\cite[Theorem 4.3 (2)]{NPY}), we have the desired relation.
\end{proof}
\begin{Example}
For $\delta=\lambda/\mu$ with $\lambda=(3,2,1)$ and $\mu=(1,1)$, 
let ${\pmb k}=
\begin{ytableau}
 \none & 1 & 3 \\
\none & 2\\
5 
\end{ytableau}$. Then 
${\pmb k}=
\begin{ytableau}
\none & A_{12} & A_{13} \\
A_{21}
\end{ytableau}$
and  the dual tableau is
${\pmb k}^{\dagger}=
\begin{ytableau}
\none & \none & A_{21}^{\dagger}\\
A_{13}^{\dagger} & A_{12}^{\dagger} 
\end{ytableau}
=
\begin{ytableau}
\none & \none & 1\\
\none & \none & 1\\
\none & \none & 1\\
\none & \none & 2\\
1 & 3 \\
2 
\end{ytableau}.
$
We can see
$$\zeta_{\delta}(\pmb k)=
\left|
\begin{matrix}
\begin{ytableau}
5
\end{ytableau} & 0 & 0\\
0 & \begin{ytableau}
1\\
2
\end{ytableau} & 
\begin{ytableau}
3\\
1\\
2
\end{ytableau}\vspace{2mm}
\\
0 & 1 & \begin{ytableau}
3
\end{ytableau}
\end{matrix}
\right|
=
\left|
\begin{matrix}
\begin{ytableau}
1\\
1\\
1\\
2
\end{ytableau} & 0 & 0\\
0 & \begin{ytableau}
3\end{ytableau} & 
\begin{ytableau}
3\\
1\\
2
\end{ytableau}\vspace{2mm}
\\
0 & 1 & \begin{ytableau}
1\\
2
\end{ytableau}
\end{matrix}
\right|
=
\left|
\begin{matrix}
\begin{ytableau}
1\\
2
\end{ytableau}
& 
\begin{ytableau}
3\\
1\\
2
\end{ytableau}\vspace{2mm}
&
0\\
1 &
\begin{ytableau}
3\end{ytableau} & 
0\\
0 & 0 &
\begin{ytableau}
1\\
1\\
1\\
2
\end{ytableau} 
\end{matrix}
\right|
=\zeta_{{\delta}^{\dagger}}({\pmb k}^{\dagger}),
$$
where ${\delta}^{\dagger}$ is the shape of ${\pmb k}^{\dagger}$. 
\end{Example}

\section{Ohno relation}\label{sectionresults2}
 Ohno  \cite{O} showed a following family of ${\mathbb Q}$-linear relations among multiple zeta values called
Ohno relation:
\begin{Theorem}\label{Ohnorelation}
For any $\ell\in {\mathbb Z_{\geq 0}}$ and
any admissible index set ${\bf k}=(k_1, \ldots, k_r)$ and its dual index set ${\bf k^{\dagger}}=(k^{\dagger}_1., \ldots, k^{\dagger}_s)$,
\begin{equation}\label{ohno}
\sum_{\substack{|\varepsilon|=\varepsilon_1+\cdots+\varepsilon_r=\ell\\
{}^{\forall}\varepsilon_i\geq 0}}\zeta(k_1+\varepsilon_1, \ldots, k_r+\varepsilon_r)=
\sum_{\substack{|\varepsilon'|=\varepsilon'_1+\cdots+\varepsilon'_s=\ell\\
{}^{\forall}\varepsilon\geq 0}} \zeta(k^{\dagger}_1+\varepsilon'_1, \ldots, k^{\dagger}_s+\varepsilon'_s).
\end{equation}
\end{Theorem}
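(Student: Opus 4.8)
This is Ohno's theorem \cite{O}; the plan is to fold both sides of \eqref{ohno} into generating series in an auxiliary variable and prove a single ``duality with a parameter''. Set
\[
G({\bf k};u):=\sum_{\ell\ge 0}\left(\sum_{\substack{\varepsilon_1+\cdots+\varepsilon_r=\ell\\ \varepsilon_i\ge 0}}\zeta(k_1+\varepsilon_1,\ldots,k_r+\varepsilon_r)\right)u^{\ell}.
\]
Since $\sum_{\varepsilon_i\ge 0}(u/m_i)^{\varepsilon_i}=(1-u/m_i)^{-1}$, this has the closed form
\[
G({\bf k};u)=\sum_{0<m_1<\cdots<m_r}\ \prod_{i=1}^{r}\frac{1}{m_i^{k_i-1}(m_i-u)}\qquad(|u|<1).
\]
Comparing coefficients of $u^{\ell}$, Theorem \ref{Ohnorelation} becomes the single identity $G({\bf k};u)=G({\bf k}^{\dagger};u)$; its constant term is exactly the classical duality, Theorem \ref{Thduality}, so what has to be shown is how that relation propagates to every $\ell$.

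The first step I would take is to turn $G({\bf k};u)$ into an iterated integral. Starting from the binary-word representation $\zeta({\bf k})=\int_{\Delta_n}\omega_{e_1}\cdots\omega_{e_n}$, with $\omega_0=\frac{dt}{t}$, $\omega_1=\frac{dt}{1-t}$, $\Delta_n=\{1>t_1>\cdots>t_n>0\}$, and the $\{0,1\}$-word $(e_1,\ldots,e_n)$ encoding ${\bf k}$ in the usual way --- so that reversing the word and swapping $0\leftrightarrow 1$ produces the word of ${\bf k}^{\dagger}$, which is the content of Theorem \ref{Thduality} --- one notes that raising $k_i$ by $\varepsilon_i$ is the same as inserting $\varepsilon_i$ letters $\omega_0$ immediately after the $i$-th $\omega_1$. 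Integrating over such an inserted decreasing run, $\int_{t>s_1>\cdots>s_{\varepsilon}>t'}\prod_j\frac{ds_j}{s_j}=\frac{1}{\varepsilon!}\log^{\varepsilon}(t/t')$, and summing the geometric series $\sum_{\varepsilon\ge 0}\frac{u^{\varepsilon}}{\varepsilon!}\log^{\varepsilon}(t/t')=(t/t')^{u}$, yields
\[
G({\bf k};u)=\int_{\Delta_n}\omega_{e_1}\cdots\omega_{e_n}\cdot\prod_{i=1}^{r}\left(\frac{t_{p_i}}{t_{p_i+1}}\right)^{u},
\]
where $p_i$ is the position of the $i$-th $\omega_1$ in the word (admissibility of ${\bf k}$ guarantees that $t_{p_i+1}$ occurs).

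Finally, the classical duality involution $t\mapsto 1-t$ remains an automorphism of $\Delta_n$; it reverses the word and interchanges $\omega_0\leftrightarrow\omega_1$, hence carries $\omega_{e_1}\cdots\omega_{e_n}$ to the word attached to ${\bf k}^{\dagger}$. What is left --- and I expect this to be the only genuinely hard point --- is to show that this same involution also identifies the two parameter kernels: the kernel pushed over from the ${\bf k}$-side ends up supported at the $\omega_0$-positions of ${\bf k}^{\dagger}$ and written through $(1-t)$-ratios, while the kernel obtained by running the above construction directly on ${\bf k}^{\dagger}$ is supported at its $\omega_1$-positions and written through $t$-ratios; these a priori different dressed integrals must be proved equal. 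This reconciliation is the real substance of Ohno's relation --- it is already nontrivial for ${\bf k}=(3)$, ${\bf k}^{\dagger}=(1,2)$ at $\ell=1$, where it reads $\zeta(4)=\zeta(2,2)+\zeta(1,3)$ --- and I would carry it out either by the combinatorial manipulation of \cite{O}, or by deducing $G({\bf k};u)=G({\bf k}^{\dagger};u)$ from the Ohno--Zagier three-variable generating-function identity, or by a connected-sum argument: introduce a Beta-type connector depending on $u$, establish boundary values and ``transport'' relations for the associated connected double series, and move the $\zeta$-data from the ${\bf k}$-side to the ${\bf k}^{\dagger}$-side one entry at a time. Once the kernels are matched, $G({\bf k};u)=G({\bf k}^{\dagger};u)$ follows, and the coefficient of $u^{\ell}$ is \eqref{ohno}.
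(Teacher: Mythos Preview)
The paper does not prove this theorem: Theorem \ref{Ohnorelation} is quoted from \cite{O} as a known result and then used as an input in the proof of Lemma \ref{Lemprodzeta} and Theorem \ref{Thmohno}. There is therefore no ``paper's own proof'' to compare against; the paper simply cites \cite{O}.

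Your sketch is a reasonable outline of the generating-function route to Ohno's relation, and the closed form for $G({\bf k};u)$ and its iterated-integral realization are correct. But you are candid that the actual content --- matching the two parameter kernels after the involution $t\mapsto 1-t$ --- is still to be done, and at that point you defer to ``the combinatorial manipulation of \cite{O}'', the Ohno--Zagier identity, or a connected-sum argument. In other words, your write-up is a well-motivated preamble that ends exactly where the real proof begins; as a self-contained argument it has a gap precisely at the step you flag. For the purposes of this paper that is fine, since the paper itself treats Theorem \ref{Ohnorelation} as a black box from \cite{O}; if you want a genuine proof rather than a pointer, the cleanest modern option is the connected-sum method (Seki--Yamamoto), which carries out the transport you describe without the kernel-matching headache.
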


Let $\lambda=(\lambda_1, \ldots, \lambda_r)$ and $\mu=(\mu_1, \ldots, \mu_r)$ be two partitions such that 
$\lambda_i\geq \mu_i$ for all $i$. 
Put $\delta=\lambda/\mu$. For ${\pmb k}\in W_\delta({\mathbb Z}_{\geq 1})$ and
${\pmb \varepsilon}\in T_{\delta}({\mathbb Z}_{\geq 0})$, we denote by 
$${\mathcal O}({\pmb k}: \ell):=\sum_{\substack{|{\pmb \varepsilon}|=\ell}} \zeta_{\delta} ({\pmb k}+{\pmb \varepsilon})
$$
for $\ell\in {\mathbb Z_{\geq 0}}$.
For ${\pmb k}_{j}^{\mathrm{col}}\in I_{(1^n)}^D$ ($j_1\leq j\leq j_r$)'s,  we define
$$
{\mathcal O}({\pmb k}_{j_1}^{\mathrm{col}}\times {\pmb k}_{j_2}^{\mathrm{col}}\times\cdots \times {\pmb k}_{j_r}^{\mathrm{col}} : \ell) 
:=\sum_{\ell_1+\ell_2+\cdots+\ell_r=\ell}
{\mathcal O}({\pmb k}_{j_1}^{\mathrm{col}} : \ell_1) \cdots {\mathcal O}({\pmb k}_{j_r}^{\mathrm{col}} : \ell_r).
$$ 
Then we have following
\begin{Lemma}\label{Lemprodzeta}
For ${\pmb k}_{j}^{\mathrm{col}}\in I_{(1^n)}^D$ ($j_1\leq j\leq j_r$),  we have
$$
{\mathcal O}({\pmb k}_{j_1}^{\mathrm{col}}\times {\pmb k}_{j_2}^{\mathrm{col}}\times\cdots \times {\pmb k}_{j_r}^{\mathrm{col}} : \ell) =
{\mathcal O}({\pmb k}_{j_r}^{\mathrm{col}, \dagger}\times \cdots \times {\pmb k}_{j_2}^{\mathrm{col}, \dagger}\times {\pmb k}_{j_1}^{\mathrm{col}, \dagger}: \ell) .
$$
\end{Lemma}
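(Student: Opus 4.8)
The plan is to reduce everything to the classical Ohno relation (Theorem \ref{Ohnorelation}) applied to each column separately, and then recombine by an elementary reindexing of the convolution. The key point is that a one-column Schur multiple zeta value is just a classical multiple zeta value: for a column shape $(1^n)$ and ${\pmb s}=(s_1,\ldots,s_n)$ read from top to bottom, the semi-standardness condition forces $m_1<m_2<\cdots<m_n$, so $\zeta_{(1^n)}({\pmb s})=\zeta(s_1,\ldots,s_n)$. Moreover, an element ${\pmb k}_j^{\mathrm{col}}\in I_{(1^n)}^{\rm D}$ is by construction a concatenation of admissible pieces $A(a,b)=(1,\ldots,1,b+1)$, hence corresponds to an admissible index set, and the dual column ${\pmb k}_j^{\mathrm{col},\dagger}$ defined in Section \ref{sectionresults} (reverse the list of admissible pieces and replace each $A(a,b)$ by $A(a,b)^{\dagger}=A(b,a)$) is precisely the classical dual index set of that index set. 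Consequently ${\mathcal O}({\pmb k}_j^{\mathrm{col}}:m)$ is exactly the left-hand side of \eqref{ohno} for this index set, and ${\mathcal O}({\pmb k}_j^{\mathrm{col},\dagger}:m)$ its right-hand side.

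First I would record, as an immediate instance of Theorem \ref{Ohnorelation}, the column-wise identity
$$
{\mathcal O}({\pmb k}_j^{\mathrm{col}}:m)={\mathcal O}({\pmb k}_j^{\mathrm{col},\dagger}:m)\qquad(m\in{\mathbb Z}_{\geq 0}),
$$
valid for each $j$; here everything is a finite sum of convergent multiple zeta values, since adding non-negative integers to an admissible index keeps it admissible, so no analytic subtlety arises (and the degenerate case of an empty column is covered by the usual conventions $\zeta_{\emptyset}=1$ and ${\mathcal O}(\emptyset:m)=0$ for $m\geq 1$). Next I would simply expand the definition of the convolution and insert this identity inside each product:
$$
{\mathcal O}({\pmb k}_{j_1}^{\mathrm{col}}\times\cdots\times{\pmb k}_{j_r}^{\mathrm{col}}:\ell)
=\!\!\sum_{\ell_1+\cdots+\ell_r=\ell}\ \prod_{i=1}^{r}{\mathcal O}({\pmb k}_{j_i}^{\mathrm{col}}:\ell_i)
=\!\!\sum_{\ell_1+\cdots+\ell_r=\ell}\ \prod_{i=1}^{r}{\mathcal O}({\pmb k}_{j_i}^{\mathrm{col},\dagger}:\ell_i).
$$

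Finally, since the factors are real numbers, the product is commutative, so I would reorder it and substitute $m_i:=\ell_{r+1-i}$, a bijection of the set $\{(\ell_1,\ldots,\ell_r)\in{\mathbb Z}_{\geq 0}^r:\ell_1+\cdots+\ell_r=\ell\}$ onto itself. This rewrites the last expression as $\sum_{m_1+\cdots+m_r=\ell}\prod_{i=1}^{r}{\mathcal O}({\pmb k}_{j_{r+1-i}}^{\mathrm{col},\dagger}:m_i)={\mathcal O}({\pmb k}_{j_r}^{\mathrm{col},\dagger}\times\cdots\times{\pmb k}_{j_1}^{\mathrm{col},\dagger}:\ell)$, which is the asserted equality. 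I do not expect a genuine obstacle: the one place deserving care is verifying that the combinatorially defined dual of a column tableau in $I_{(1^n)}^{\rm D}$ coincides with the classical dual index set appearing in Theorem \ref{Ohnorelation}, which is a direct unwinding of the definitions of $A(a,b)$ and $A(a,b)^{\dagger}$.
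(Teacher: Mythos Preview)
Your proof is correct and follows essentially the same approach as the paper: expand the convolution by definition, apply the classical Ohno relation (Theorem~\ref{Ohnorelation}) to each factor ${\mathcal O}({\pmb k}_{j_i}^{\mathrm{col}}:\ell_i)={\mathcal O}({\pmb k}_{j_i}^{\mathrm{col},\dagger}:\ell_i)$, and then reorder by commutativity of the product. The paper presents this as a terse chain of equalities, while you additionally spell out why a column Schur value is a classical multiple zeta value and why the tableau dual of a column agrees with the classical dual index set, but the argument is the same.
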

\begin{proof}
By Theorem \ref{Ohnorelation}, we have
\begin{align*}
{\mathcal O}({\pmb k}_{j_1}^{\mathrm{col}}\times {\pmb k}_{j_2}^{\mathrm{col}}\times\cdots \times {\pmb k}_{j_r}^{\mathrm{col}} : \ell) 
&=\sum_{\ell_1+\ell_2+\cdots+\ell_r=\ell}
{\mathcal O}({\pmb k}_{j_1}^{\mathrm{col}} : \ell_1) \cdots {\mathcal O}({\pmb k}_{j_r}^{\mathrm{col}} : \ell_r)\\
&=\sum_{\ell_1+\ell_2+\cdots+\ell_r=\ell}
{\mathcal O}({\pmb k}_{j_1}^{\mathrm{col}, \dagger} : \ell_1) \cdots {\mathcal O}({\pmb k}_{j_r}^{\mathrm{col}, \dagger} : \ell_r)\\
&=\sum_{\ell_1+\ell_2+\cdots+\ell_r=\ell}
{\mathcal O}({\pmb k}_{j_r}^{\mathrm{col}, \dagger} : \ell_r) \cdots {\mathcal O}({\pmb k}_{j_1}^{\mathrm{col}, \dagger} : \ell_1)\\
&={\mathcal O}({\pmb k}_{j_r}^{\mathrm{col}, \dagger}\times {\pmb k}_{j_{r-1}}^{\mathrm{col}, \dagger}\times\cdots \times {\pmb k}_{j_1}^{\mathrm{col}, \dagger} : \ell) .
\end{align*}

\end{proof}
Applying Lemma \ref{Lemprodzeta}, we obtain the following formula which is the extension of Ohno relation(Theorem \ref{Ohnorelation}). 
\begin{Theorem}\label{Thmohno}
Let $\lambda$ and $\mu$ be partitions. Put $\delta=\lambda/\mu$ and ${\pmb k} \in I_{\delta}^{\rm D}$
and ${\pmb k}^{\dagger}$ being the dual tableau of ${\pmb k}$, for $\ell\in {\mathbb Z_{\geq 0}}$ we have
$$
{\mathcal O}({\pmb k}: \ell)={\mathcal O}({\pmb k}^{\dagger}: \ell).
$$
\end{Theorem}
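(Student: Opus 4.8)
The plan is to mimic the proof of Theorem~\ref{maintheorem}, with the classical duality (Theorem~\ref{Thduality}) replaced by the classical Ohno relation (Theorem~\ref{Ohnorelation}). The new ingredient is an Ohno-analogue of the skew Jacobi--Trudi formula \eqref{JTEskew}. For admissible one-row indices $I_1,\dots,I_s$ write $\mathcal O(I_1\times\cdots\times I_s:\ell)$ just as in the paragraph preceding Lemma~\ref{Lemprodzeta}, and for ${\pmb k}\in I_\delta^{\rm D}$ let $r_{ij}$ denote the $(i,j)$-entry index of the determinant \eqref{JTEskew} attached to ${\pmb k}$. The first step is the identity
$$\mathcal O({\pmb k}:\ell)=\sum_{\sigma\in{\frak S}_s}\varepsilon_\sigma\,\mathcal O\big(r_{1\sigma(1)}\times\cdots\times r_{s\sigma(s)}:\ell\big)\qquad(\ell\ge 0),$$
equivalently $\sum_{\ell\ge 0}\mathcal O({\pmb k}:\ell)t^\ell=\det\big[\sum_{\ell\ge 0}\mathcal O(r_{ij}:\ell)t^\ell\big]_{1\le i,j\le s}$ as formal power series in $t$, with the conventions $\mathcal O(\emptyset:0)=1$, $\mathcal O(\emptyset:\ell)=0$ for $\ell>0$, and $\mathcal O(\text{nonexistent}:\ell)=0$, matching those used in \eqref{JTEskew}.

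To prove this identity I would expand the right-hand side: the $\sigma$-term equals $\varepsilon_\sigma\sum_{\ell_1+\cdots+\ell_s=\ell}\prod_i\mathcal O(r_{i\sigma(i)}:\ell_i)$, and since for the contributing $\sigma$ the ribbons $\theta_1^\sigma,\dots,\theta_s^\sigma$ of the associated $E$-rim decomposition partition $\delta$, a weight-$\ell$ tableau ${\pmb\varepsilon}\in T_\delta({\mathbb Z}_{\ge 0})$ splits uniquely into weight-$\ell_i$ tableaux on the ribbons $\theta_i^\sigma$. Regrouping, the right-hand side becomes $\sum_{|{\pmb\varepsilon}|=\ell}\big(\sum_{\sigma}\varepsilon_\sigma\prod_i\zeta(\theta_i^\sigma({\pmb k}+{\pmb\varepsilon}))\big)$, i.e. the sum over ${\pmb\varepsilon}$ of the right-hand side of the key lemma applied to ${\pmb k}+{\pmb\varepsilon}$. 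Now I apply Lemma~\ref{Lemdagger} (the $\sum_{\mathrm{diag}}$-form of the key lemma; note ${\pmb k}+{\pmb\varepsilon}\in W_{\delta,E}$ since ${\pmb k}\in I_\delta^{\rm D}$ and ${\pmb\varepsilon}\ge 0$) to each ${\pmb k}+{\pmb\varepsilon}$ and sum over ${\pmb\varepsilon}$: because ${\pmb k}$ is constant on diagonals, $\sum_{\mathrm{diag}}$ merely permutes the boxes of ${\pmb\varepsilon}$ within diagonals, which is a bijection of $\{{\pmb\varepsilon}:|{\pmb\varepsilon}|=\ell\}$, so the spurious factor $\prod_j|S_j|$ appears on both sides and cancels, leaving $\sum_{|{\pmb\varepsilon}|=\ell}\zeta_\delta({\pmb k}+{\pmb\varepsilon})=\mathcal O({\pmb k}:\ell)$. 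This establishes the displayed identity.

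Granting it, the argument of Theorem~\ref{maintheorem} carries over with $\zeta(\cdot)$ replaced by its Ohno generating function $\sum_\ell\mathcal O(\cdot:\ell)t^\ell$. Applying the classical Ohno relation entry-by-entry replaces each entry index $r_{ij}$ by its dual $r_{ij}^{\dagger}$ — this is exactly the mechanism of Lemma~\ref{Lemprodzeta}, namely Ohno on each factor followed by reversing the order of the product, and it uses that $\mathcal O(I_1\times\cdots\times I_s:\ell)$ depends only on the multiset $\{I_1,\dots,I_s\}$. Then the substitution replacing $i$ and $j$ by $s-j+1$ and $s-i+1$ leaves the determinant unchanged and identifies $\det\big[\sum_\ell\mathcal O(r_{ij}^{\dagger}:\ell)t^\ell\big]$ with the Ohno--Jacobi--Trudi determinant of ${\pmb k}^{\dagger}$ on $\delta^{\dagger}$, exactly as in the proof of Theorem~\ref{maintheorem}. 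Hence $\sum_\ell\mathcal O({\pmb k}:\ell)t^\ell=\sum_\ell\mathcal O({\pmb k}^{\dagger}:\ell)t^\ell$, and comparing coefficients of $t^\ell$ yields $\mathcal O({\pmb k}:\ell)=\mathcal O({\pmb k}^{\dagger}:\ell)$.

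The main obstacle is the Ohno--Jacobi--Trudi identity of the first step, and inside it the fact that for a general skew shape the tableaux ${\pmb k}+{\pmb\varepsilon}$ occurring in $\mathcal O({\pmb k}:\ell)$ need not be constant on diagonals, so Theorem~\ref{skewJT} / Corollary~\ref{Cor31} cannot be invoked termwise; one must first sum over all ${\pmb\varepsilon}$ of a fixed weight and exploit the diagonal symmetrization built into Lemma~\ref{Lemdagger}. A routine point to verify along the way is that each ribbon index $\theta_i^\sigma({\pmb k})$ (equivalently, each nonzero $r_{ij}$) is admissible, so that Theorem~\ref{Ohnorelation} indeed applies to it; this follows from ${\pmb k}\in I_\delta^{\rm D}\subset W_{\delta,E}$, under which the products $\prod_i\zeta(\theta_i^\sigma({\pmb k}))$ appearing in the key lemma are already products of convergent multiple zeta values.
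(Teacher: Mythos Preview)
Your proof is correct and is essentially the same argument as the paper's: both use the diagonal-symmetrization form of the key lemma (Lemma~\ref{Lemdagger}) on ${\pmb k}+{\pmb\varepsilon}$, sum over $|{\pmb\varepsilon}|=\ell$ so that the factor $\prod_j|S_j|$ cancels from both sides, and then apply Lemma~\ref{Lemprodzeta} ribbonwise before reassembling the result as the analogous expansion for ${\pmb k}^{\dagger}$. Your determinant/generating-function packaging and the explicit index swap $(i,j)\mapsto(s{-}j{+}1,s{-}i{+}1)$ are cosmetic differences; the paper instead writes the intermediate identity as a sum over $S_E^{\delta}$ and passes to $\delta^{\dagger}$ directly, but the content is identical.
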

We will explain it with the example, first.
\begin{Example}
Let $\lambda=(2,2)$ and $\mu=\emptyset$. We consider ${\pmb k}=
\begin{ytableau}
2 & 3\\
4 &2
\end{ytableau} 
\in I_{\lambda}^D$ and ${\pmb \varepsilon}\in T_{\lambda}({\mathbb Z}_{\geq 0})$
with $\ell=1$. Then
$$\left\{{\pmb k}+{\pmb \varepsilon}\; \left| \;|{\pmb \varepsilon}|=\ell| \right. \right\}=
\left\{
\begin{ytableau}
3 & 3\\
4 &2
\end{ytableau} , \;
\begin{ytableau}
2 & 4\\
4 &2
\end{ytableau} ,\;
\begin{ytableau}
2 & 3\\
5 &2
\end{ytableau} ,\;
\begin{ytableau}
2 & 3\\
4 &3
\end{ytableau} 
\right\}.
$$ 
By Lemma \ref{Lem31}, we have
\begin{align}
&\sum_{\mathrm{diag}}
\zeta_{\lambda}\left(
\begin{ytableau}
3 & 3\\
4 &2
\end{ytableau}
\right)=
\zeta_{\lambda}\left(
\begin{ytableau}
3 & 3\\
4 &2
\end{ytableau}
\right)+
\zeta_{\lambda}\left(
\begin{ytableau}
2 & 3\\
4 &3
\end{ytableau}
\right)=\sum_{\mathrm{diag}}\sum_{\sigma\in S_E^{\lambda}}\varepsilon_{\sigma}\prod_{i=1}^2 \zeta\left(\theta_i^{\sigma}\left(
\begin{ytableau}
3 & 3\\
4 &2
\end{ytableau}
\right)\right)\label{diag1},\\
&
\sum_{\mathrm{diag}}
\zeta_{\lambda}\left(
\begin{ytableau}
2 & 4\\
4 &2
\end{ytableau}
\right)=
\zeta_{\lambda}\left(
\begin{ytableau}
2 & 4\\
4 &2
\end{ytableau}
\right)+
\zeta_{\lambda}\left(
\begin{ytableau}
2 & 4\\
4 &2
\end{ytableau}
\right)=2\sum_{\sigma\in S_E^{\lambda}}\varepsilon_{\sigma}\prod_{i=1}^2 \zeta\left(\theta_i^{\sigma}\left(
\begin{ytableau}
2 & 4\\
4 &2
\end{ytableau}
\right)\right)\label{diag2},\\
&
\sum_{\mathrm{diag}}
\zeta_{\lambda}\left(
\begin{ytableau}
2 & 3\\
5 &2
\end{ytableau}
\right)=
\zeta_{\lambda}\left(
\begin{ytableau}
2 & 3\\
5 &2
\end{ytableau}
\right)+
\zeta_{\lambda}\left(
\begin{ytableau}
2 & 3\\
5 &2
\end{ytableau}
\right)=2\sum_{\sigma\in S_E^{\lambda}}\varepsilon_{\sigma}\prod_{i=1}^2 \zeta\left(\theta_i^{\sigma}\left(
\begin{ytableau}
2 & 3\\
5 &2
\end{ytableau}
\right)\right)\label{diag3},\\
&\sum_{\mathrm{diag}}
\zeta_{\lambda}\left(
\begin{ytableau}
2 & 4\\
4 &3
\end{ytableau}
\right)=
\zeta_{\lambda}\left(
\begin{ytableau}
2 & 3\\
4 &3
\end{ytableau}
\right)+
\zeta_{\lambda}\left(
\begin{ytableau}
3 & 3\\
4 &2
\end{ytableau}
\right)=\sum_{\mathrm{diag}}\sum_{\sigma\in S_E^{\lambda}}\varepsilon_{\sigma}\prod_{i=1}^2 \zeta\left(\theta_i^{\sigma}\left(
\begin{ytableau}
2 & 3\\
4 &3
\end{ytableau}
\right)\right)\label{diag4}.
\end{align}
Since the equations \eqref{diag1} and \eqref{diag4} are the same, taking the summation both sides of them leads to
\begin{align}
2{\mathcal O}\left({\pmb k}:1\right)
&=
\sum_{\mathrm{diag}}\left(
\zeta_{\lambda}\left(
\begin{ytableau}
3 & 3\\
4 &2
\end{ytableau}
\right)
+
\zeta_{\lambda}\left(
\begin{ytableau}
2 & 4\\
4 &2
\end{ytableau}
\right)
+
\zeta_{\lambda}\left(
\begin{ytableau}
2 & 3\\
5 &2
\end{ytableau}
\right)
+
\zeta_{\lambda}\left(
\begin{ytableau}
2 & 3\\
4 &3
\end{ytableau}
\right)
\right)\notag\\
&=2\sum_{\sigma\in S_E^{\lambda}}\varepsilon_{\sigma}\prod_{i=1}^2 
\left(
\zeta\left(\theta_i^{\sigma}\left(
\begin{ytableau}
3 & 3\\
4 &2
\end{ytableau}
\right)\right)
+
\zeta\left(\theta_i^{\sigma}\left(
\begin{ytableau}
2 & 4\\
4 &2
\end{ytableau}
\right)\right)
\right.\notag\\
&\hspace{1cm}+\left.
\zeta\left(\theta_i^{\sigma}\left(
\begin{ytableau}
2 & 3\\
5 &2
\end{ytableau}
\right)\right)
+
\zeta\left(\theta_i^{\sigma}\left(
\begin{ytableau}
2 & 3\\
4 &3
\end{ytableau}
\right)\right)
\right)\label{examplecal},
\end{align}
where the coefficient $2$ is the order of symmetric group whose order is the number of the elements in the diagonal. Dividing by $2$ in \eqref{examplecal}, we obtain
$${\mathcal O}\left({\pmb k}:1\right)=\sum_{|{\pmb \varepsilon}|=1}\sum_{\sigma\in S_E^{\lambda}}\varepsilon_{\sigma}\prod_{i=1}^2\zeta(\theta_i^{\sigma}({\pmb k}+{\pmb \varepsilon})).
$$
The right hand side is
\begin{align*}
&\sum_{\sigma\in S_E^{\lambda}}\varepsilon_{\sigma}\prod_{i=1}^2 
\left(
\zeta\left(\theta_i^{\sigma}\left(
\ytableausetup{centertableaux}
\begin{ytableau}
3 & 3\\
4 &2
\end{ytableau}
\right)\right)
+
\zeta\left(\theta_i^{\sigma}\left(
\begin{ytableau}
2 & 4\\
4 &2
\end{ytableau}
\right)\right)
\right.\\
&\hspace{4cm}+\left.
\zeta\left(\theta_i^{\sigma}\left(
\begin{ytableau}
2 & 3\\
5 &2
\end{ytableau}
\right)\right)
+
\zeta\left(\theta_i^{\sigma}\left(
\begin{ytableau}
2 & 3\\
4 &3
\end{ytableau}
\right)\right)
\right)\\
\\
&=\zeta(3,4)\zeta(3,2)+
\zeta(2,4)\zeta(4,2)+\zeta(2,5)\zeta(3,2)+\zeta(2,4)\zeta(3,3)\\
&-(\zeta(3)\zeta(3,2,4)+\zeta(2)\zeta(4,2,4)+\zeta(2)\zeta(3,2,5)
+\zeta(2)\zeta(3,3,4))
\\
&=
{\mathcal O}\left(
\begin{ytableau}
2 \\
4 
\end{ytableau}\times
\begin{ytableau}
3 \\
2 
\end{ytableau}\; : 1\right)
-
{\mathcal O}\left(
\begin{ytableau}
2
\end{ytableau}\times
\begin{ytableau}
3 \\
2 \\
4 
\end{ytableau}\; : 1\right).
\end{align*}
By Lemma \ref{Lemprodzeta}, this becomes
$$
{\mathcal O}\left(
\ytableausetup{centertableaux}
\begin{ytableau}
2 \\
1\\
2
\end{ytableau}\times
\begin{ytableau}
1 \\
1\\
2\\
2
\end{ytableau}\; : 1\right)
-
{\mathcal O}\left(
\begin{ytableau}
1 \\
1\\
2 \\
2\\
1\\
2 
\end{ytableau}\times
\begin{ytableau}
2
\end{ytableau}\; : 1\right).
$$
Similar calculation above gives
\begin{align*}
&\sum_{\sigma\in S_E^{\lambda}}\varepsilon_{\sigma}\prod_{i=1}^2 
\left(
\zeta\left(\theta_i^{\sigma}\left(
\begin{ytableau}
\none & 1\\
\none & 1\\
3 & 2\\
1 & 2\\
2
\end{ytableau}
\right)\right)
+
\zeta\left(\theta_i^{\sigma}\left(
\begin{ytableau}
\none & 1\\
\none & 1\\
2 & 2\\
2 & 2\\
2
\end{ytableau}
\right)\right)
+
\zeta\left(\theta_i^{\sigma}\left(
\begin{ytableau}
\none & 1\\
\none & 1\\
2 & 2\\
1 & 2\\
3
\end{ytableau}
\right)\right)
\right.\\
&+\left.
\zeta\left(\theta_i^{\sigma}\left(
\begin{ytableau}
\none & 2\\
\none & 1\\
2 & 2\\
1 & 2\\
2
\end{ytableau}
\right)\right)
+
\zeta\left(\theta_i^{\sigma}\left(
\begin{ytableau}
\none & 1\\
\none & 2\\
2 & 2\\
1 & 2\\
2
\end{ytableau}
\right)\right)
+
\zeta\left(\theta_i^{\sigma}\left(
\begin{ytableau}
\none & 1\\
\none & 1\\
2 & 3\\
1 & 2\\
2
\end{ytableau}
\right)\right)
+
\zeta\left(\theta_i^{\sigma}\left(
\begin{ytableau}
\none & 1\\
\none & 1\\
2 & 2\\
1 & 3\\
2
\end{ytableau}
\right)\right)
\right)\\
&={\mathcal O}\left(
\ytableausetup{centertableaux}
\begin{ytableau}
\none & 1\\
\none & 1\\
2 & 2\\
1 & 2\\
2
\end{ytableau}
\; :1
\right)={\mathcal O}({\pmb k}^{\dagger}\; : 1).
\end{align*}
 \end{Example}
\noindent
{\it Proof of Theorem \ref{Thmohno}.}\;
Assume ${\pmb k}$ has $n$ diagonal lines with $2$ or more elements.
Each of them has $m_1, \ldots, m_n$ elements, respectively ($m_i\geq 2$ for $1\leq i\leq n$).

By Lemma \ref{Lem31},
\begin{align*}
|{\frak S}_{m_1}|\ldots |{\frak S}_{m_n}|{\mathcal O}({\pmb k}: \ell)
&=\sum_{\mathrm{diag}}\sum_{\substack{|{\pmb \varepsilon}|=\ell}} \zeta_{\delta} ({\pmb k}+{\pmb \varepsilon})\\
&=|{\frak S}_{m_1}|\ldots |{\frak S}_{m_n}|
\sum_{\substack{|{\pmb \varepsilon}|=\ell}}\sum_{\sigma\in S_E^{\delta}}\varepsilon_{\sigma}\prod_{i=1}^s \zeta(\theta_i^{\sigma}({\pmb k}+{\pmb \varepsilon})).
\end{align*}
This leads to
$${\mathcal O}({\pmb k}: \ell)=\sum_{\substack{|{\pmb \varepsilon}|=\ell}}\sum_{\sigma\in S_E^{\delta}}\varepsilon_{\sigma}\prod_{i=1}^s \zeta(\theta_i^{\sigma}({\pmb k}+{\pmb \varepsilon})).
$$
After changing the order of summations, 
\begin{align*}
{\mathcal O}({\pmb k}: \ell)&=\sum_{\sigma\in S_E^{\delta}}\varepsilon_{\sigma}\sum_{\substack{
{\pmb \varepsilon}\in T_{\delta}({\mathbb Z}_{\geq 0})\\
|{\pmb \varepsilon}|=\ell}}\prod_{i=1}^s \zeta(\theta_i^{\sigma}({\pmb k}+{\pmb \varepsilon}))\\
&=\sum_{\sigma\in S_E^{\lambda}}\varepsilon_{\sigma}{\mathcal O}(\theta_1^{\sigma}({\pmb k})\times \theta_2^{\sigma}({\pmb k})\times \ldots \times \theta_s^{\sigma}({\pmb k})  : \ell).
\end{align*}
By Lemma \ref{Lemprodzeta}, this becomes
\begin{align*}
{\mathcal O}({\pmb k}: \ell)&=\sum_{\sigma'\in S_E^{\delta^{\dagger}}}\varepsilon_{\sigma}{\mathcal O}(\theta_s^{\sigma'}({\pmb k}^{\dagger})\times \theta_{s-1}^{\sigma'}({\pmb k}^{\dagger})\times \ldots \times \theta_1^{\sigma'}({\pmb k}^{\dagger})  : \ell)\\
&=\sum_{\sigma'\in S_E^{\delta^{\dagger}}}\varepsilon_{\sigma'}\sum_{\substack{{\pmb \varepsilon}'\in T_{\delta^{\dagger}}({\mathbb Z}_{\geq 0})\\
|{\pmb \varepsilon}'|=\ell}}\prod_{i=1}^s \zeta(\theta_i^{\sigma'}({\pmb k}^{\dagger}+{\pmb \varepsilon}'))\\
&=\sum_{\substack{{\pmb \varepsilon}'\in T_{\delta^{\dagger}}({\mathbb Z}_{\geq 0})\\
|{\pmb \varepsilon}'|=\ell}}\sum_{\sigma'\in S_E^{\delta^{\dagger}}}\varepsilon_{\sigma'}\prod_{i=1}^s \zeta(\theta_i^{\sigma'}({\pmb k}^{\dagger}+{\pmb \varepsilon}'))
={\mathcal O}({\pmb k}^{\dagger}: \ell).
\end{align*}
\hspace{15cm}$\Box$
%
%
%
%

%
\bigskip
\noindent
\textsc{Maki Nakasuji}\\
Department of Information and Communication Science, Faculty of Science, \\
 Sophia University, \\
 7-1 Kio-cho, Chiyoda-ku, Tokyo, 102-8554, Japan \\
 \texttt{nakasuji@sophia.ac.jp}

\medskip
\noindent
\textsc{Yasuo Ohno}\\
Mathematical Institute, \\
Tohoku University, \\
Sendai 980-8578, Japan \\
 \texttt{ohno.y@m.tohoku.ac.jp}

\end{document}